\theoremstyle{plain}
\newtheorem{thm}{Theorem}[section]
\newtheorem{lem}[thm]{Lemma}
\newtheorem{cor}[thm]{Corollary}
\theoremstyle{definition}
\theoremstyle{remark}
\def\e{\varepsilon}
\title{\bf List-coloring graphs on surfaces \\with varying list-sizes}
\author{
 Alice M. Dean\\
\small Department of Mathematics and Computer Science\\[-0.8ex]
\small Skidmore College\\[-0.8ex]
\small Saratoga Springs, NY 12866, USA\\
\small\tt adean@skidmore.edu\\
\and
 Joan P. Hutchinson\\
\small Department of Mathematics, Statistics, and Computer Science\\[-0.8ex]
\small Macalester College\\[-0.8ex]
\small St. Paul, MN 55105, USA\\
\small\tt hutchinson@macalester.edu\\
}
\date{
{\bf In Memory of Herbert S. Wilf, 1931-2012}}
\begin{document}

\maketitle

\begin{abstract}
Let $G$ be a graph embedded on a surface $S_\e$ with Euler genus $\e > 0$, and let $P\subset V(G)$ be a set of vertices mutually at distance at least 4 apart. Suppose all vertices of $G$ have $H(\e)$-lists and the vertices of $P$ are precolored, where $H(\e)=\Big\lfloor\frac{7 + \sqrt{24\e + 1}}{2}\Big\rfloor$ is the Heawood number. We show that the coloring of $P$ extends to a list-coloring of $G$ and that the distance bound of 4 is best possible. Our result provides an answer to an analogous question of Albertson about extending a precoloring of a set of mutually distant vertices in a planar graph to a 5-list-coloring of the graph and generalizes a result of Albertson and Hutchinson to list-coloring extensions on surfaces.

  \bigskip\noindent \textbf{Keywords:} list-coloring; Heawood number; graphs on surfaces
\end{abstract}

\section{Introduction}

For a graph $G$ the distance between vertices $x$ and $y$, denoted $dist(x, y)$, is the number of edges in a shortest $x$-$y$-path in $G$, and we denote by $dist(P)$ the least distance between two vertices of $P$.
In~\cite{albertson98} M. O. Albertson asked if there is a distance $d > 0$ such that every planar graph with a 5-list for each vertex and a set of precolored vertices $P$ with $dist(P) \ge d$ has a list-coloring that is an extension of the precoloring of $P$. In that paper he proved such a result for 5-coloring with $d \ge 4$, answering a question of C. Thomassen. There have been some preliminary answers to Albertson's question in~\cite{ahl, dlm, kawaramohar09}; initially Tuza and Voigt~\cite{tv} showed that $d > 4$. 
Kawarabayashi and Mohar~\cite{kawaramohar09} 
have shown that when $P$ contains $k$ vertices, there is a function $d_k > 0$ that suffices for such list-coloring. Then recently Dvo\v{r}\'{a}k, Lidick\'{y}, Mohar and Postle~\cite{dlmp} have announced a complete solution, answering Albertson's question in the affirmative, independent of the size of $P$. 

Let $S_\e$ denote a surface of Euler genus $\e > 0$. Its Heawood number is given by
	\[H(\e) = \Big\lfloor\frac{7 + \sqrt{24\e + 1}}{2}\Big\rfloor\] 
and gives the best possible bound on the chromatic number of $S_\e$ except for the Klein bottle whose chromatic number is 6. (For all basic chromatic and topological graph theory results, see~\cite{jt, mt}.) In many instances results for list-coloring graphs on surfaces parallel classic results on surface colorings. Early on it was noted that the Heawood number also gives the list-chromatic number for surfaces; see~\cite{jt} for history. Also Dirac's Theorem~\cite{dirac57} has been generalized to list-coloring by B\"{o}hme, Mohar and Stiebitz for most surfaces; the missing case, $\e = 3$, was completed by Kr\'{a}l' and \v{S}krekovski. This result informs and eases much of our work.

\begin{thm}[\cite{bohme99, kral06}]\label{thm:dirac} If $G$ embeds on $S_\e$, $\e > 0$, then $G$ can be $(H(\e)-1)$-list-colored unless $G$ contains $K_{H(\e)}$.
\end{thm}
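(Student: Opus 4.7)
I would proceed by the method of a minimal counterexample, mimicking the classical proof of Dirac's theorem in the list-coloring setting. Set $k = H(\e) - 1$. Assume, toward a contradiction, that $G$ is embedded on $S_\e$, does not contain $K_{H(\e)}$, and admits a list assignment $L$ with $|L(v)| = k$ for every $v$ such that $G$ has no $L$-coloring. Choose such a pair $(G, L)$ with $|V(G)|$ minimum; then $G$ is $L$-critical in the sense that $G - v$ is $L$-colorable for every $v$.

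First I would observe that $\delta(G) \geq k$. Indeed, if a vertex $v$ had fewer than $k$ neighbors, one could $L$-color $G - v$ by minimality and extend by choosing any color of $L(v)$ not used on $v$'s neighbors. I would then invoke Euler's formula $|E(G)| \leq 3|V(G)| - 6 + 3\e$ for simple graphs embedded in $S_\e$, combined with $2|E(G)| \geq (H(\e)-1)|V(G)|$, to deduce
\[
(H(\e)-7)\,|V(G)| \;\leq\; 6\e - 12.
\]
Because $H(\e)$ is by definition the largest integer $n$ with $(n-3)(n-4) \leq 6\e$, this is a nontrivial finite constraint on $|V(G)|$ as soon as $H(\e) > 7$.

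The heart of the argument is then a structural analysis of $L$-critical graphs of high minimum degree on $S_\e$, paralleling Dirac's treatment. One seeks a vertex $v$ of degree exactly $H(\e)-1$ whose neighborhood $N(v)$ induces a clique; then $\{v\}\cup N(v)$ realizes $K_{H(\e)}$ in $G$, contradicting the hypothesis. Dirac's classical trick of identifying two nonadjacent neighbors of $v$ does not transfer cleanly to the list setting, since merging two vertices may disrupt the list-coloring structure. In its place I would work directly with an $L$-coloring $\phi$ of $G-v$: if some color in $L(v)$ is missing from $\phi(N(v))$, we extend directly; otherwise, since $|L(v)| = |N(v)|$, the colors on $N(v)$ are a bijection with $L(v)$, and one analyzes Kempe components of the two-color subgraph on two neighbors of $v$ with the hope of swapping to free up a color.

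The principal obstacle is the small-genus regime $\e \in \{1,2,3\}$, where $H(\e) \leq 7$ and the displayed inequality degenerates into $0 \leq 6\e - 12$, giving no bound on $|V(G)|$ at all. In these cases one needs genuinely embedding-sensitive arguments, handling near-triangulations by Thomassen-style reducibility in the list setting and using the classification of graphs of small Euler genus with high minimum degree; historically the case $\e = 3$ was the last to be settled, by Kr\'{a}l' and \v{S}krekovski.
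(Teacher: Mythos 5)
The paper does not prove this statement at all: it is quoted verbatim from B\"{o}hme--Mohar--Stiebitz and Kr\'{a}l'--\v{S}krekovski and used as a black box, so there is no internal proof to compare against. Judged on its own, your proposal is an outline of the standard opening of such a proof, not a proof. The criticality setup, the bound $\delta(G)\ge H(\e)-1$, and the Euler-formula inequality $(H(\e)-7)|V(G)|\le 6\e-12$ are all correct. But the two places where the theorem is actually difficult are exactly the two places you leave open. First, the structural step: even when $H(\e)>7$, the inequality only bounds $|V(G)|$ by a quantity that is at least $H(\e)$ (since $(H(\e)-3)(H(\e)-4)\le 6\e$ gives $H(\e)(H(\e)-7)\le 6\e-12$), so you are left with a nonempty family of small, dense, $K_{H(\e)}$-free candidates that must be ruled out by hand; the counting does not finish the job. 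Your plan for this step is a Kempe-chain swap, offered ``with the hope of swapping to free up a color.'' That hope is not justified: in the list setting a two-color component need not admit a swap at all, because a vertex colored $a$ may not have $b$ in its list, so the recoloring is not even well-defined. This is precisely why the published proofs do not use Dirac's identification trick or naive Kempe chains, and instead go through Gallai-type bounds on the number of edges of list-critical graphs with few vertices. Asserting that the obstacle exists and then proceeding as if it were surmountable is a gap, not a proof.

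Second, the regime $\e\in\{1,2,3\}$ (where $H(\e)\le 7$ and your inequality gives no bound on $|V(G)|$) is where the bulk of the real work lies: for $\e=1$ one must show projective-planar graphs without $K_6$ are $5$-choosable, and the case $\e=3$ resisted the original argument entirely and required a separate, long paper by Kr\'{a}l' and \v{S}krekovski. You correctly identify this regime as problematic but supply nothing beyond the phrase ``genuinely embedding-sensitive arguments.'' So the proposal names the right difficulties but resolves neither of them; as it stands it establishes only the easy preliminary reductions.
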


Analogously to Albertson's question on the plane, we and others (see~\cite{kawaramohar09}) ask related list-coloring questions for surfaces. In this paper we ask if there is a distance $d > 0$ such that every graph on $S_\e$, $\e > 0$, with $H(\e)$-lists on each vertex and a set of precolored vertices $P$ with $dist(P) \ge d$ has a list-coloring that is an extension of the precoloring of $P$. In~\cite{ah02} Albertson and Hutchinson proved the following result; the main result of this paper generalizes this theorem to list-coloring.

\begin{thm}[\cite{ah02}]\label{thm:ah02}
For each $\e > 0$, except possibly for $\e = 3$, if $G$ embeds on a surface of Euler genus $\e$ and if $P$ is a set of precolored vertices with $dist(P)\ge6$, then the precoloring extends to an $H(\e)$-coloring of $G$.
\end{thm}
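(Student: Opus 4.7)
The plan is to argue by contradiction from a vertex-minimal counterexample. Suppose $G$ embedded on $S_\e$ with precolored set $P$ at pairwise distance $\ge 6$ is a counterexample, minimal in $|V(G)|$, with precoloring $c\colon P\to\{1,\dots,H(\e)\}$ that does not extend. I would reformulate the extension problem as a list-coloring on $G':=G-P$: assign each $u\in N_G(v)$, $v\in P$, the list $L(u):=\{1,\dots,H(\e)\}\setminus\{c(v)\}$, and every other vertex of $G'$ the full list $\{1,\dots,H(\e)\}$. Because $dist(P)\ge 6\ge 3$, no vertex of $G'$ has two neighbors in $P$, so each list has size $H(\e)-1$ or $H(\e)$, and any proper $L$-coloring of $G'$ lifts to the sought-after $H(\e)$-coloring of $G$ that extends $c$.

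After restricting each size-$H(\e)$ list to an arbitrary $(H(\e)-1)$-subset and applying Theorem~\ref{thm:dirac} to the embedded subgraph $G'\subseteq S_\e$, I obtain such a coloring unless $G'$ contains a copy $K$ of $K_{H(\e)}$. Because $K$ has diameter $1$ and $dist_G(P)\ge 6$, at most one precolored vertex $v_0$ can satisfy $dist_G(v_0,K)\le 2$, for otherwise two vertices of $P$ would be at distance $\le 5$ through $K$. This splits the argument into case (A), $dist_G(K,P)\ge 3$, and case (B), exactly one $v_0\in P$ lies within distance $2$ of $K$.

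In case (A) every neighbor of $K$ has the full list $\{1,\dots,H(\e)\}$. Applying minimality of $G$ to the proper subgraph $G-K$ (which embeds on $S_\e$ with $dist_{G-K}(P)\ge 6$) yields an extension of the precoloring to $G-K$, and it remains to color the $H(\e)$ vertices of $K$ from the available palette. Since all $H(\e)$ colors are a priori allowed on each vertex of $K$, the distinct-color requirement on $K$ together with outside-neighbor restrictions becomes a system-of-distinct-representatives problem on $K$, which I would solve by Hall's theorem, falling back on Kempe-chain recolorings inside $G-K$ to free a color should Hall's condition fail. In case (B), the subgraph $K$ together with a shortest path of length at most $2$ from $v_0$ to $K$ is topologically rigid: $K_{H(\e)}$ already attains the Euler-genus bound on $S_\e$, so Euler's formula forces dense local structure around $v_0$. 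One then shows that either a relabeling of $K$ avoids $c(v_0)$ on every neighbor of $v_0$ in $K$ (reducing to case (A)), or a local $H(\e)$-recoloring near $v_0$ frees that color.

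The chief obstacle is case (B): since $K_{H(\e)}$ is topologically optimal on $S_\e$, a nearby precolored vertex leaves essentially no geometric slack, and the finishing argument must come from the embedding structure rather than from Theorem~\ref{thm:dirac}. The exclusion of $\e=3$ in the hypothesis reflects that Theorem~\ref{thm:dirac} for $\e=3$ had not yet been proved when~\cite{ah02} appeared, so any step invoking Theorem~\ref{thm:dirac} on a subgraph of $G$ breaks down precisely there. Finally, this plan should adapt to the list-coloring generalization announced as the main theorem of the paper by replacing the fixed palette with arbitrary $H(\e)$-lists throughout; both the SDR-type argument on $K$ and the Kempe-chain exchange have list-coloring analogues provided the relevant cases of Theorem~\ref{thm:dirac} are in hand.
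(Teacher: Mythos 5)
First, note that the paper does not prove this statement itself; it is quoted from~\cite{ah02}, and the machinery the present paper builds (Sections 2--4, culminating in Thm.~\ref{thm:thebiggie}) is a strengthening of it. Your opening reduction is the right one and matches that machinery: excise $P$, leave lists of size at least $H(\e)-1$ (using $dist(P)\ge 3$ so no vertex has two precolored neighbors), and invoke Thm.~\ref{thm:dirac}; your diagnosis that the $\e=3$ exception in~\cite{ah02} traces to the then-missing case of the choosability version of Dirac's theorem is also correct, and so is the observation that $dist(P)\ge 6$ leaves at most one precolored vertex interacting with a copy $K$ of $K_{H(\e)}$ (this is exactly how Lemma~\ref{lem:Kn}.\ref{c1} gets used in Thms.~\ref{thm:manyvs} and~\ref{thm:thebiggie}).

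The genuine gap is your handling of the case where $K=K_{H(\e)}$ is present: you color $G-K$ first and then try to finish on $K$ by Hall's theorem plus Kempe chains. This order is backwards and the finishing step fails in general. After $G-K$ is colored, each vertex of $K$ must receive a color avoiding its $H(\e)-1$ clique-neighbors \emph{and} all of its outside neighbors; a single vertex of $K$ may have outside neighbors carrying all $H(\e)$ colors, so its set of admissible colors is empty and there is no SDR to find. Kempe-chain repairs are not a workable fallback here (and have no reliable list-coloring analogue, undermining your claimed adaptation to the main theorem). The missing idea is to exploit the embedding of $K_{H(\e)}$: because $K_{H(\e)}$ attains the Euler bound on $S_\e$, its 2-cell faces are small --- at most $(H(\e)-1)$-regions, almost all triangles (Lemma~\ref{lem:2cellKn}, Lemma~\ref{lem:2cellface}) --- so the rest of $G$ lies inside these faces. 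One therefore colors the clique (together with the at most one nearby vertex of $P$) \emph{first}, and then extends inward into each face, where the problem is planar with a short precolored outer cycle and is handled by Thomassen/B\"ohme--Mohar--Stiebitz-type extension results (Cor.~\ref{cor:bms}, Lemma~\ref{lem:lem1a}); non-2-cell faces require the cutting surgery of Lemma~\ref{lem:2cellface} and Cor.~\ref{cor:cutgenus}. Your case (B) gestures at ``topological rigidity'' but this rigidity is precisely what must be turned into the quantitative face-size bounds above; as written, both cases end in declarations of intent rather than arguments.
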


Others have studied similar extension questions with $k$-lists on vertices for $k \ge 5$. For example, see~\cite{thomassen97}, Thm. 4.4, for $k\ge6$ and~\cite{kawaramohar09}, Thm. 6.1, for $k=5$; however, in both results
 the embedded graphs must satisfy constraints depending on the Euler genus and the number of precolored vertices. 
Our main result is Thm.~\ref{thm:mainresult}, which shows that there is a constant bound on the distance between precolored vertices that ensures list-colorability for all graphs embedded on all surfaces when vertices have $H(\e)$-lists. It improves on Thm.~\ref{thm:ah02} by removing the possible exception for $\e=3$, reducing the distance of the precolored vertices from 6 to 4, and broadening the results to list-coloring.

\begin{thm}\label{thm:mainresult} Let $G$ embed on $S_\e$, $\e > 0$, and let $P \subset V(G)$ be a set of vertices with $dist(P)\ge4$. Then if the vertices of $P$ each have a 1-list and all other vertices have an $H(\e)$-list, $G$ can be list-colored. The distance bound of 4 is best possible.
\end{thm}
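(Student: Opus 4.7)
The plan is to propagate the precolored values onto neighbor lists and then appeal to Theorem~\ref{thm:dirac}. Writing $c(v)$ for the assigned color of each $v\in P$, I would delete $c(v)$ from the list of every neighbor of $v$, and then delete $P$ itself to obtain $G':=G-P$ with these shrunken lists. Two structural consequences of $dist(P)\ge 4$ drive the argument. First, each $x\in V(G')$ is adjacent to at most one vertex of $P$, since two $P$-neighbors of $x$ would give $dist(P)\le 2$; hence every list in $G'$ has size at least $H(\e)-1$. Second, for distinct $u,v\in P$, no edge of $G'$ joins $N(u)$ to $N(v)$, because such an edge would yield a $u$-$v$ path of length $3$. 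Thus the shrunken-list vertices of $G'$ decompose as the disjoint union $\bigsqcup_{u\in P}N(u)$ with no edges between the parts, and within each $N(u)$ all shrunken lists miss the single color $c(u)$.

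Next I apply Theorem~\ref{thm:dirac} to $G'$. If $G'$ contains no copy of $K_{H(\e)}$, the theorem yields an $(H(\e)-1)$-list-coloring of $G'$ which combines with the precoloring of $P$ to extend as desired. Otherwise $G'$ contains some $K\cong K_{H(\e)}$, and since $K$ is a clique the second structural fact forces $K\cap N(P)$ to lie in a single $N(u)$: say $k$ vertices of $K$ have shrunken lists missing $c(u)$, while the other $H(\e)-k$ vertices of $K$ retain full $H(\e)$-lists. I would exploit this asymmetry by choosing some $y\in K\setminus N(u)$ and a color $\phi(y)\in L(y)$---ideally $\phi(y)=c(u)$ when available---so that after deleting $\phi(y)$ from the lists of neighbors of $y$ in $G'$, every remaining list still has size at least $H(\e)-1$. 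The key local fact is that the neighbors of $y$ that lie in $K\cap N(u)$ already miss $c(u)$, so the choice $\phi(y)=c(u)$ does not shrink those lists further, and meanwhile a vertex of $K-y$ with a full $H(\e)$-list can absorb the loss of any other color.

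The main technical obstacle is setting up an induction that carries this local progress to a coloring of all of $G'$: a newly fixed $y$ lies within distance $2$ of $u\in P$, so $y$ cannot simply be added to $P$ without violating the global distance bound. I would address this by strengthening the inductive statement to permit auxiliary locally precolored vertices attached to members of $P$, maintaining the two structural facts above and using $|V(G)|$ (possibly refined by a secondary measure such as the number of $K_{H(\e)}$ subgraphs of $G'$) to close the recursion. Finally, the sharpness of the distance-$4$ bound would be established by exhibiting, for each $\e>0$, a graph embedded on $S_\e$ that contains a $K_{H(\e)}$ together with two precolored vertices at distance exactly $3$ whose precolors force on the clique a list assignment admitting no proper coloring, following the template of the planar constructions in~\cite{ah02,tv}.
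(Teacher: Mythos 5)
Your reduction handles only the easy half of the theorem. Excising $P$ and invoking Theorem~\ref{thm:dirac} is exactly the paper's Theorem~\ref{thm:easy}, and your structural observations (each vertex of $G-P$ sees at most one vertex of $P$; a clique meets $\bigcup_u N(u)$ in a single $N(u)$) are correct and also appear in the paper. But the case in which $G$ (or $G'$) contains $K_{H(\e)}$ is the entire content of the result, and there your proposal stops at an acknowledged ``technical obstacle.'' The local move you describe --- recoloring one clique vertex $y$ so that all lists in $G'$ regain size $H(\e)-1$ --- does not help, because Theorem~\ref{thm:dirac} still cannot be applied to a graph containing $K_{H(\e)}$ no matter how the lists are massaged, and no purely combinatorial induction on $|V(G)|$ will close this: with only $H(\e)$ colors available and a clique of that size present, one must use the \emph{embedding}. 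The paper's Theorem~\ref{thm:thebiggie} colors $K_{H(\e)}$ (together with the at most one nearby vertex of $P$) first, and then extends that coloring into each face of the embedded clique using the planar precoloring-extension machinery of B\"ohme--Mohar--Stiebitz (Lemma~\ref{lem:lem1a}); this works only because Euler's formula bounds every 2-cell face of an embedded $K_{H(\e)}$ by an $(H(\e)-1)$-region, with the extremal $DK_{H(\e)}$ configuration handled separately via Lemma~\ref{lem:DKn}, and because non-2-cell faces can be removed by surface surgery in a double induction on genus and order. None of this structure is recoverable from your sketch, and ``strengthening the inductive statement to permit auxiliary locally precolored vertices'' is not workable as stated: a vertex precolored at distance $2$ from a member of $P$ destroys precisely the hypothesis ($dist(P)\ge 4$) that made your clique analysis go through.

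The sharpness construction is also not right as proposed. Two precolored vertices at distance $3$ cannot in general force an uncolorable list assignment on $K_{H(\e)}$: to delete a common color from the list of \emph{every} clique vertex you would need the two precolored vertices jointly adjacent to all $H(\e)$ clique vertices, and a single outside vertex adjacent to $H(\e)-1$ of them already forces the $DK_{H(\e)}$ configuration, which embeds on $S_\e$ only for the special genera identified in Lemma~\ref{lem:DKn}. The paper's example instead attaches one pendant precolored vertex to \emph{each} vertex of $K_{H(\e)}$ (these pendants are pairwise at distance exactly $3$ through the clique), which kills one common color from every clique list and works on every $S_\e$.
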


When $G$ is embedded on $S_\e$, let the \emph{width}~\cite{width} denote the length of a shortest noncontractible cycle of $G$; this is also known as \emph{edge-width}. For list-coloring we have the following corollary of Thms.~\ref{thm:dirac} and~\ref{thm:mainresult}.

\begin{cor}\label{cor:firstwidth}
If $G$ embeds on $S_\e$, $\e > 0$, with width at least 4, if the vertices of  $P\subset V(G)$ 
have 1-lists and all other vertices have $H(\e)$-lists, then $G$ is list-colorable when $dist(P)\ge3$. The distance bound of 3 is best possible.
\end{cor}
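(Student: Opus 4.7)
I reduce to Theorem~\ref{thm:dirac}. Let $G' := G - P$, and for each $v \in V(G')$ define the reduced list
\[ L'(v) := L(v) \setminus \{c(p) : p \in N(v) \cap P\}, \]
where $c(p)$ is the precolor of $p$. Since $dist(P) \ge 3$, each $v \in V(G')$ is adjacent to at most one vertex of $P$ (otherwise two distinct vertices of $P$ would be at distance at most $2$), so $|L'(v)| \ge H(\e)-1$. Because $G'$ also embeds on $S_\e$, Theorem~\ref{thm:dirac} says $G'$ is $(H(\e)-1)$-list-colorable unless it contains $K_{H(\e)}$. If $G'$ contains no such $K_{H(\e)}$, I truncate each $L'(v)$ to an arbitrary $(H(\e)-1)$-subset, apply Theorem~\ref{thm:dirac} to $L'$-color $G'$, and combine with the precoloring of $P$ to finish.

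\textbf{The hard case.} The main obstacle is when $G'$ contains a copy $K$ of $K_{H(\e)}$; here the width hypothesis is essential. Two embedding facts drive the argument: (i) by Ringel--Youngs together with the definition of $H(\e)$, $K_{H(\e)+1}$ does not embed on $S_\e$, so no single $p \in P$ can be adjacent to every vertex of $V(K)$; and (ii) width $\ge 4$ forces every 3-cycle of $G$ to bound a disk in $S_\e$, so any triangle $\{p,v,v'\}$ with $p \in P$ and $v,v' \in K \cap N(p)$ is contractible. Using these to tightly restrict how $P$ can interact with $K$, I would verify Hall's condition $|\bigcup_{v \in S} L'(v)| \ge |S|$ for every $S \subseteq V(K)$, producing an $L'$-coloring of $K$; a careful choice of which $L'$-colors to place on $V(K)$ then keeps the reduced lists on $K$'s neighbors large enough that Theorem~\ref{thm:dirac} extends the coloring to the rest of $G'$.

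\textbf{Tightness.} To show the distance bound of $3$ is best possible, I would exhibit, for each $\e > 0$, a graph $G$ embedded on $S_\e$ of width at least $4$ with two precolored vertices at distance exactly $2$ whose precoloring does not extend to an $L$-coloring with $H(\e)$-lists elsewhere. A natural candidate attaches an $H(\e)$-chromatic local obstruction built around a copy of $K_{H(\e)}$ to two precolored vertices sharing a common neighbor, with distant edges subdivided to raise the width past $3$, and with lists arranged so that the colors available on the obstruction fall one short of what is needed.
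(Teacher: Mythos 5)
Your opening reduction (delete $P$, shrink each neighbor's list by at most one color since $dist(P)\ge 3$ forces every vertex to have at most one neighbor in $P$, then invoke Theorem~\ref{thm:dirac}) is exactly the paper's Theorem~\ref{thm:easy}, and that part is fine. The gap is in what you call the hard case. The entire point of the hypothesis ``width at least $4$'' is that it makes this case vacuous: any embedding of $K_{H(\e)}$ on $S_\e$ contains a noncontractible $3$-cycle, because $K_{H(\e)}$ has $\binom{H(\e)}{3}$ triangles while any embedding on $S_\e$ has strictly fewer faces (the paper computes this from Lemma~\ref{lem:2cellKn}; e.g.\ for $H(\e)=3i+3$ there are $(3i+3)(3i+2)(3i+1)/6$ triangles versus at most $(i+1)(3i+2)$ faces), and in an embedded complete graph every contractible triangle must bound a face. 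Hence width $\ge 4$ already forbids $K_{H(\e)}$ as a subgraph of $G$, and Theorem~\ref{thm:easy} finishes. You noticed that width forces triangles to be contractible, but you deployed that fact only to ``restrict how $P$ interacts with $K$'' rather than to kill $K$ itself.

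As written, your handling of the hard case would not go through even if the case were nonempty. Hall's condition can genuinely fail for $K_{H(\e)}$ with $(H(\e)-1)$-lists (take all lists identical and $S=V(K)$), and neither of your facts (i)--(ii) rules out the reduced lists $L'$ all coinciding on $V(K)$; this failure is precisely the exceptional case in Theorem~\ref{thm:dirac}, so it cannot be waved away. Moreover Theorem~\ref{thm:dirac} is not a precoloring-extension statement, and a vertex of $G'\setminus V(K)$ may be adjacent to many vertices of $K$, so after coloring $K$ its list can drop far below $H(\e)-1$; the claim that a ``careful choice'' of colors on $V(K)$ keeps all such lists large is unsupported. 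Finally, your tightness sketch is not a construction, and building the obstruction around a copy of $K_{H(\e)}$ is inconsistent with width $\ge 4$ by the argument above; the paper's example is much simpler: a vertex $x$ with a $k$-list $L(x)$ and $k$ pendant neighbors precolored with the $k$ colors of $L(x)$ --- these precolored leaves are pairwise at distance exactly $2$.
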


Given that graphs embedded with very large width can be 5-list-colored as proved in~\cite{dkm}, it is straightforward to deduce a 6-list-coloring extension result for such graphs. When $G$ embeds on $S_\e$, $\e > 0$, with width at least $2^{O(\e)}$, if a set of vertices $P$ with $dist(P)\ge3$ have 1-lists and all others have 6-lists, then after the vertices of $P$ are deleted and the color of each $x \in P$ is deleted from the lists of $x$'s neighbors, the remaining graph has 5-lists, large width, and so is list-colorable. Thus $G$ is list-colorable, but only when embedded with large width whose size increases with the Euler genus of the surface.

A consequence of Thomassen's proof of 5-list-colorability of planar graphs \cite{thomassen94} is that if all vertices of a graph in the plane have 5-lists except that the vertices of one face have 3-lists, then the graph can be list-colored. For surfaces, we offer as a related result another corollary of Thm.~\ref{thm:mainresult}.

\begin{cor}\label{cor:twosizes}
If $G$ embeds on $S_\e$, $\e > 0$, and contains a set of faces each pair of which is at distance at least two apart, with all vertices on these faces having $(H(\e)-1)$-lists and all other vertices having $H(\e)$-lists, then $G$ can be list-colored. 
\end{cor}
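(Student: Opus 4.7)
The plan is to deduce Corollary~\ref{cor:twosizes} from Theorem~\ref{thm:mainresult} by inserting, inside each distinguished face, a new precolored vertex adjacent to the whole boundary of that face. Let $\mathcal{F}$ be the given family of faces. For each $F\in\mathcal{F}$ I place a new vertex $u_F$ in the interior of $F$ and join it by edges drawn inside $F$ to every vertex on the boundary walk of $F$; the resulting graph $G'$ still embeds on $S_\e$. For each $F$ I pick a fresh color $c_F$ that does not appear in $L(v)$ for any vertex $v$ on the boundary of $F$, and define new lists on $G'$ by $L'(u_F)=\{c_F\}$, $L'(v)=L(v)\cup\{c_F\}$ for $v$ on the boundary of some $F\in\mathcal{F}$, and $L'(v)=L(v)$ otherwise. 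Then every $u_F$ carries a $1$-list, and every vertex of $V(G)$ carries an $H(\e)$-list in $G'$.

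Set $P=\{u_F:F\in\mathcal{F}\}$, precolored by $u_F\mapsto c_F$. I next verify $dist_{G'}(P)\ge 4$. Any path in $G'$ from $u_F$ to $u_{F'}$ with $F\ne F'$ begins with an edge to a boundary vertex $a$ of $F$ and ends with an edge into $u_{F'}$ from a boundary vertex $b$ of $F'$, so has length at least $2+dist_G(a,b)$. The hypothesis that the faces of $\mathcal{F}$ are pairwise at distance at least two in $G$ gives $dist_G(a,b)\ge 2$; a possible detour through another new vertex $u_{F''}$ does not shorten the path, since such a detour would force a boundary vertex of $F''$ to either coincide with or be adjacent to a vertex of $F$ or $F'$, again contradicting the distance hypothesis. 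Hence $dist_{G'}(u_F,u_{F'})\ge 4$, and Theorem~\ref{thm:mainresult} applied to $G'$ produces an $L'$-list-coloring that extends the precoloring.

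Restricting this coloring to $V(G)$ solves the original problem: a boundary vertex $v$ of $F\in\mathcal{F}$ is colored from $L'(v)=L(v)\cup\{c_F\}$ but not with $c_F$, because its neighbor $u_F$ has color $c_F$, so it is colored from $L(v)$; every other vertex is colored from $L'(v)=L(v)$. The only point needing care is the choice of $c_F$: if $c_F$ were already in $L(v)$ for some boundary vertex $v$ of $F$, then $L'(v)$ would have size only $H(\e)-1$ and the reduction would fail. Choosing each $c_F$ from outside the finitely many colors appearing in the relevant lists avoids this, and I do not foresee any other complication; the corollary is essentially a formal consequence of Theorem~\ref{thm:mainresult} once this gadget is set up.
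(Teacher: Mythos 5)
Your proposal is correct and is essentially the paper's own proof: insert a new precolored vertex inside each distinguished face joined to its whole boundary, pad the boundary lists with the fresh color, check that the pairwise face-distance hypothesis forces $dist(P)\ge 4$, and invoke Theorem~\ref{thm:mainresult}. The only (immaterial) difference is that you use a separate fresh color per face while the paper uses a single new color $\alpha$ absent from all lists of $G$.
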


The paper concludes with related questions.

\section{Background results on surfaces,\\ Euler genus and the Heawood formula}

Let $S_\e$ denote a surface of Euler genus $\e > 0$. If $\e$ is odd, then $S_\e$ is the nonorientable surface with $\e$~crosscaps, but when $\e$ is even, $S_\e$ may be orientable or not. We let {\Large $\tau$} denote the torus, the orientable surface of Euler genus 2, and {\Large $\kappa$} the Klein bottle, the nonorientable surface of Euler genus 2.

The Heawood number $H(\e)$, defined above, gives the largest $n$ for which 
$K_n$ embeds on a surface $S_\e$ of Euler genus $\e$, as well as the chromatic number of $S_\e$, except that $K_6$ is the largest complete graph embedding on {\Large $\kappa$} and 6 is its chromatic number.

The least Euler genus $\e$ for which $K_n$ embeds on $S_\e$ is given by the inverse function
	 \[\e = I(n) = \Big\lceil\frac{(n-3)(n-4)}{6}\Big\rceil.\]

Each $K_n$, $n \ge 5$, of course, has a minimum value of $\e > 0$ for which it embeds on $S_\e$, called the \emph{Euler genus of $K_n$}, but for $\e \ge 2$ more than one surface $S_\e$ may have the same maximum $K_n$ that embeds on it. For example, both $S_5$ and $S_6$ have Heawood number 9 with $K_9$ being the largest complete graph that embeds there. Embedding patterns of $K_{H(\e)}$ depend on the congruence class of $H(\e)$ modulo 3 for $\e \ge 1$. In Table~\ref{tab:heawood}, which gives values of $\e$ and $H(\e)$ for $\e=1, \ldots, 24$, $e$ is the number of edges in $K_{H(\e)}$, $f = 2-\e-v+e$ is the number of faces in a 2-cell embedding of $K_{H(\e)}$ on $S_\e$, and the final column gives the size of the largest possible face when $K_{H(\e)}$ is so embedded. That largest face size is three more than the difference $2e - 3f$.

\begin{table}
\begin{center}
\begin{tabular}{|c|c|c|c|c||c|c|c|c|c|}
\hline
 $\e$&$H(\e)$&$e$&$f$&Largest&$\e$&$H(\e)$&$e$&$f$&Largest\\
 & & & &Face& & & & &Face\\
\hline
1&6&15&10&3&
13&12&66&43&6\\
2&7&21&14&3&
14&12&66&42&9\\
3&7&21&13&6&
15&13&78&52&3\\
4&8&28&18&5&
16&13&78&51&6\\
5&9&36&24&3&
17&13&78&50&9\\
6&9&36&23&6&
18&13&78&49&12\\
7&10&45&30&3&
19&14&91&60&5\\
8&10&45&29&6&
20&14&91&59&8\\
9&10&45&28&9&
21&14&91&58&11\\
10&11&55&36&5&
22&15&105&70&3\\
11&11&55&35&8&
23&15&105&69&6\\
12&12&66&44&3&
24&15&105&68&9\\
\hline
\end{tabular}
 \caption{Embedding parameters for $K_{H(\e)}$ }
 \label{tab:heawood}
\end{center}
\end{table}

For our results we need to know when $K_{H(\e)}$ necessarily has a 2-cell embedding on $S_\e$. When $ K_n$ embeds on $S_\e$, but not on $S_{\e-1}$, then $K_n$ necessarily embeds with a 2-cell embedding. When $K_n$ embeds in addition on $S_{\e+1}$, \ldots, $S_{\e+i}$ with $i > 0$, then it may not have a 2-cell embedding on the latter surfaces. For example, on surfaces $S_1$, {\Large$ \tau$}, $S_4$, and $S_5$, the complete graphs $K_6, K_7, K_8$ and $K_9$ have 2-cell embeddings, respectively, but $K_6, K_7$ and $K_9$ may or may not have 2-cell embeddings on {\Large $\kappa$}, $S_3$ and $S_6$, respectively. 

If $f$ is a face of an embedded graph $G$, let $V(f)$ and $E(f)$ denote the incident vertices and edges of $f$. We say that $V(f) \cup E(f)$ is the \emph{boundary} of $f$ and that the \emph{closure} of $f$ is the union of $f$ and its boundary. 
Each edge of $E(f)$ either lies on another face besides $f$ or it might lie just on $f$. For example, Fig.~\ref{fig:2cell} shows two graphs embedded on the torus, {\Large$ \tau$}. In the first graph, edges 2-3 and 4-7 each border two faces, but edges 3-6 and 8-9 each border only one face. The size $s$ of a face $f$ is determined by counting, with multiplicity, the number of edges on its boundary, and we then call $f$ an $s$-region. In other words, when $s_1$ edges of $E(f)$ lie on another face of $G$ besides $f$ and $s_2$ edges lie only on $f$, then we call $f$ an $s$-region where $s = s_1 + 2s_2$. When $f$ is a 2-cell, $E(f)$ forms a single facial walk $W_f$, and the size of the face equals the length of the facial walk, counting multiplicity of repeated edges. Since an $s$-region $f$ may have repeated edges and repeated vertices, we indicate $|V(f)| = t$ by calling $f$ also a $t$-vertex-region where $t \le s$. Hence the shaded region in the first graph in 
Fig.~\ref{fig:2cell} is a 13-region and a 9-vertex-region, since two edges and four vertices are repeated; the shaded region in the second graph, with no repeated vertices or edges, is a 13-region and a 13-vertex-region.

\begin{figure}[htbp] 
\centerline{\includegraphics[bb=5 14 564 256,width=.7\textwidth,keepaspectratio]{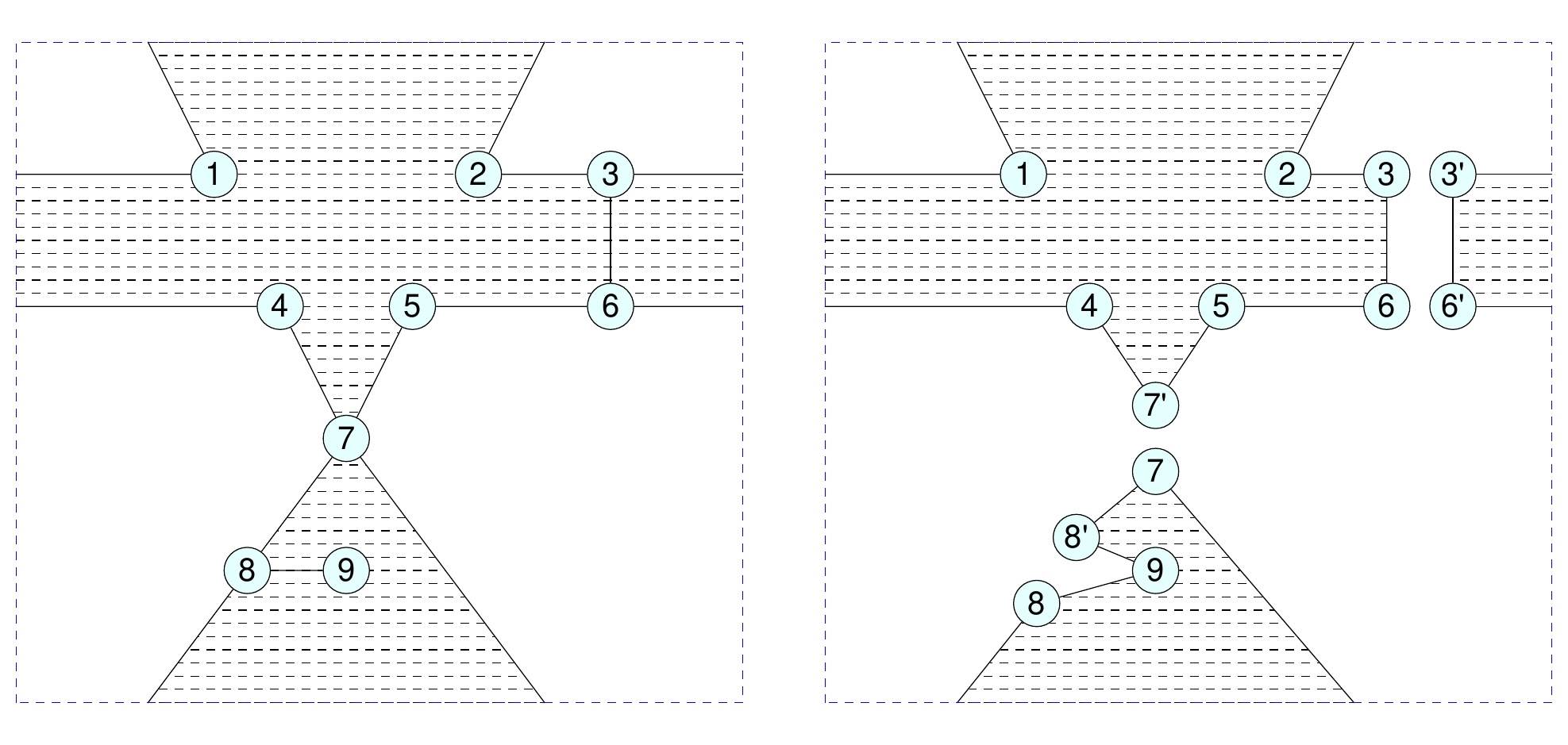}}
 \caption{A 2-cell region in a graph embedded on the torus, {\Large$ \tau$},  before and after vertex- and edge-duplication}
 \label{fig:2cell}
\end{figure}

Here in summary are statistics on 2-cell embeddings of $K_{H(\e)}$. The patterns presented are visible from Table~\ref{tab:heawood} and are easily derived from Euler's formula and the function $I(n)$, given above.

\begin{lem}\label{lem:2cellKn} Let $\e \ge 1$ and suppose $K_{H(\e)}$ has a 2-cell embedding on $S_\e$ (but $S_\e \ne$ {\Large $\kappa$}). Set $i = \Big\lfloor\frac{H(\e)-3}{3}\Big\rfloor$ so that $H(\e) = 3i+3, 3i+4$ or $3i+5$ with $i\ge1$.

\begin{enumerate}
\item\label{e1} If $H(\e) = 3i+3$, then 
$\e = (3i^2-i)/2, (3i^2-i+2)/2, \ldots, $ or $(3i^2+i-2)/2$.
The number of faces of the embedding is given by $f = 3i^2 + 5i + 2, 3i^2 + 5i + 1, \ldots, $ or $3i^2 + 4i + 3$, respectively, and the largest possible face is an $s$-region with $s = 3, 6, \ldots, $ or $3i$, resp.
\item\label{e2} If $H(\e) = 3i+4$, then 
$\e = (3i^2+i)/2, (3i^2+i+2)/2, \ldots, $ or $(3i^2+3i)/2$.
The number of faces of the embedding is given by $f = 3i^2 + 7i + 4, 3i^2 + 7i + 3, \ldots, $ or $3i^2 + 6i + 4$, respectively, and the largest possible face is an $s$-region with $s = 3, 6, \ldots, $ or $3i+3$, resp. 
\item\label{e3} If $H(\e) = 3i+5$, then 
$\e = (3i^2+3i+2)/2, (3i^2+3i+4)/2, \ldots, $ or $(3i^2+5i)/2$.
The number of faces of the embedding is given by $f = 3i^2 + 9i + 6, 3i^2 + 9i + 5, \ldots, $ or $3i^2 + 8i + 7$, respectively, and the largest possible face is an $s$-region with $s = 5, 8, \ldots, $ or $3i+2$, resp. 
 \end{enumerate}
\end{lem}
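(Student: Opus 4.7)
The plan is to derive everything from two ingredients: Euler's formula for 2-cell embeddings, $v - e + f = 2 - \e$, applied to $K_{H(\e)}$ with $v = H(\e)$ and $e = \binom{H(\e)}{2}$, together with the elementary bound that every face of a simple-graph embedding has size at least $3$. The first ingredient rewrites as $f = 2 - \e + \binom{H(\e)}{2} - H(\e)$, so within a range of $\e$ on which $H(\e)$ is constant, $f$ drops by exactly $1$ each time $\e$ increases by $1$. The second ingredient, combined with the identity that face-sizes sum to $2e$, forces the largest face size $s$ to satisfy $s + 3(f-1) \leq 2e$; equivalently $s \leq 2e - 3f + 3$, which grows by $3$ each time $f$ drops by $1$. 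This is exactly the relation flagged in the paragraph preceding Table~\ref{tab:heawood}.

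First I would pin down the range of $\e$ for each residue of $H(\e)$ modulo $3$. Since $H(\e) = n$ holds precisely when $I(n) \leq \e < I(n+1)$, and $I(n) = \lceil (n-3)(n-4)/6 \rceil$, plugging $n = 3i+3,\,3i+4,\,3i+5$ into $I(n)$ and $I(n+1)$ and simplifying the ceilings yields exactly the three arithmetic progressions of $\e$-values listed in the lemma, each with common difference $1$.

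Next, given those ranges I would evaluate $f$ at the minimum $\e$ in each case from $f = 2 - \e + \binom{n}{2} - n$. A short computation returns the starting face counts $3i^2+5i+2$, $3i^2+7i+4$, $3i^2+9i+6$ for parts (\ref{e1}), (\ref{e2}), (\ref{e3}), respectively, and the $f$-decrement rule above fills in the remaining entries. In the same way, $s = 2e - 3f + 3$ evaluates to $3,\,3,\,5$ at the minimum $\e$ in the three cases, and the $s$-increment rule yields the stated progressions $3, 6, \dots, 3i$;\, $3, 6, \dots, 3i+3$;\, and $5, 8, \dots, 3i+2$.

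The work is essentially bookkeeping, and the main obstacle I expect is handling the ceiling in $I(n)$ uniformly across the three residues. I would need to verify that $I(3i+4)-I(3i+3) = i$, $\,I(3i+5)-I(3i+4) = i+1$, and $I(3i+6)-I(3i+5) = i$, so that the three lists of $\e$-values have the right lengths and hence the largest-face progressions terminate at exactly $3i$, $3i+3$, and $3i+2$. Once these counts are confirmed, the remaining claims reduce to straightforward arithmetic, with no topological input needed beyond the assumed 2-cell embedding.
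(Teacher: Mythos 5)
Your proposal is correct and follows exactly the route the paper intends: the paper gives no written proof of this lemma, stating only that the patterns ``are easily derived from Euler's formula and the function $I(n)$,'' which is precisely your combination of $f = 2 - \e + \binom{H(\e)}{2} - H(\e)$, the range $I(n) \le \e < I(n+1)$, and the bound $s \le 2e - 3f + 3$ already flagged in the text before Table~\ref{tab:heawood}. The arithmetic checks out in all three residue classes (including the gap counts $i$, $i+1$, $i$), so nothing further is needed.
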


From the point of view of the genus, given $\e > 0$, we can determine directly whether or not $K_{H(\e)}$ necessarily has a 2-cell embedding on $S_\e$. $K_{H(\e)}$ necessarily has a 2-cell embedding if and only if $\e = (3i^2 - i)/2$ or $(3i^2 + i)/2$ or $(3i^2 + 3i + 2)/2$ for some value of $i > 0$. Thus given $\e > 0$, we compute $H(\e)$ and set $i = \lfloor{H(\e)/3}\rfloor - 1$ so that $H(\e) = 3i+3, 3i+4$, or $3i+5$. Then $K_{H(\e)}$ necessarily embeds with a 2-cell embedding if $I(H(\e)) = \e$; that is, $S_\e$ is the genus surface for $K_{H(\e)}$.

In the results of Table~\ref{tab:heawood} we do not claim that every 2-cell embedding of $K_{H(\e)}$ achieves the maximum face size when that size is greater than three. 
For example when $K_{H(\e)}$ has a largest face being a 5- or 6-region, it might embed as a near-triangulation with one 5- or 6-region, respectively, or it might be a triangulation except for two 4-regions or a triangulation except for a 4- and a 5-region, resp. (An embedding is a \emph{near-triangulation} if at most one region is not 3-sided.)
 
We note from Table~\ref{tab:heawood} and Lemma~\ref{lem:2cellKn} that there are some instances of $\e$ when $K_{H(\e)}$ embeds possibly with an $(H(\e)-1)$-region which might allow for the embedding of two different (not disjoint, but distinct) copies of $K_{H(\e)}$ on $S_\e$, as explained in the next lemma.

\begin{lem}\label{lem:DKn} Let $K_{H(\e)}$ have a 2-cell embedding on $S_\e$, $\e > 0$.
\begin{enumerate}
\item\label{b1} The largest possible face in the embedding is an $(H(\e)-1)$-region. If there is an $(H(\e)-1)$-region, there is just one, and the embedding is a near-triangulation. 
\item\label{b2} If every face of the embedding is at most an $(H(\e)-2)$-region, then no additional copy of $K_{H(\e)}$ can simultaneously embed on $S_\e$.

\item\label{b3} When $K_{H(\e)}$ can embed with an $(H(\e)-1)$-region that is also an\linebreak $(H(\e)-1)$-vertex-region, then two different copies of $K_{H(\e)}$ can embed, by adding a vertex adjacent to all vertices of that region, and then the two complete graphs share a copy of $K_{H(\e)-1}$. Such an embedding is possible only if $H(\e) = 3i+4$ and $\e = (3i^2 + 3i)/2$, and the resulting embedding is a triangulation. 
\end{enumerate}
\end{lem}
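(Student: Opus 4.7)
My plan is to apply Euler's formula $V - E + F = 2 - \e$ throughout to the 2-cell embedding of $K = K_{H(\e)}$, combined with the defining inequality $6\e < (H-2)(H-3)$ (where $H = H(\e)$). For part (\ref{b1}), $F = \binom{H}{2} - H + 2 - \e$ and $2E = H(H-1)$ turn the inequality $s + 3(F-1) \le 2E$ for the largest face size $s$ into $s \le \tfrac{1}{2}\bigl(6\e - (H-1)(H-6)\bigr)$; since $(H-2)(H-3) = (H-1)(H-6) + 2H$, the Heawood bound yields $s < H$, and integrality gives $s \le H - 1$. Repeating the count with two distinguished faces of sizes $s = H - 1$ and $t$ yields $t < 4$, so every face other than the $(H-1)$-region must be a triangle (a near-triangulation). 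A third run with two faces of size $H - 1$ returns $6\e \ge (H-4)(H+1)$, which for $H \ge 5$ already exceeds the Heawood bound, so the $(H-1)$-region is unique.

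For part (\ref{b2}), suppose for contradiction a second, distinct copy $V_2$ of $K_H$ is embedded on $S_\e$ alongside $V_1 = V(K)$. Because every edge on $V_1$ is already drawn, $V_2$ must contain some vertex $v \notin V_1$, lying in the interior of an original face $f$. Any edge from $v$ stays inside $\bar f$, so all of $V_2$ lies in $\bar f$: old members on $V(f)$, new members in the interior of $f$. Setting $k = |V_2 \setminus V_1|$ and $m = H - k$, let $H'$ be the subgraph of $V_2$ drawn inside $\bar f$; it contains the $\binom{k}{2} + km$ new-new and new-old edges together with the $m^* \ge 0$ boundary edges of $f$ lying in $V_2$, and planarity of $H'$ forces $\binom{k}{2} + km + m^* \le 3H - 6$. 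The hypothesis $|V(f)| \le H - 2$ forces $k \ge 2$, and $k \ge 4$ is excluded since $\binom{k}{2} + km > 3H - 6$ whenever $k \ge 4$ and $H \ge 5$. For $k = 2$, the only way to fit $m = H - 2$ vertices of $V_2$ into $V(f)$ is $|V(f)| = H - 2$ with simple boundary, giving $m^* = H - 2$ and $|E(H')| = 3H - 5$; for $k = 3$, planarity forces $m^* = 0$, while a short case analysis of how $V_2 \cap V(f)$ can sit on the boundary walk of $f$ (with $|V(f)| \in \{H-3, H-2\}$) always produces $m^* \ge H - 4$, again contradicting planarity.

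For part (\ref{b3}), the $(H-1)$-vertex-region $f$ is a simple $(H-1)$-cycle whose vertex set $V(f)$ already spans $K_{H-1}$ inside $V_1$; placing a new vertex $v$ in $f$ joined to each vertex of $V(f)$ produces a second copy $\{v\} \cup V(f) \cong K_H$ sharing that $K_{H-1}$ with the first. The parametric restriction drops out of the equality case of part (\ref{b1}): $s = H - 1$ forces $6\e = (H-1)(H-4)$, and $(H-1)(H-4)$ is divisible by $6$ exactly when $H \equiv 1 \pmod 3$, giving $H = 3i + 4$ and $\e = 3i(i+1)/2$. The $H - 1$ new edges triangulate $f$ into $H-1$ triangles, and by part (\ref{b1}) every other original face was already a triangle, so the augmented embedding is a triangulation. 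I expect the main obstacle to be the tight case analysis in part (\ref{b2}): the planarity bound is sharp at $k = 2, 3$, so ruling out those cases requires a careful look at the cyclic arrangement of $V_2 \cap V(f)$ on the boundary walk, and at faces whose boundary walks may not be simple cycles.
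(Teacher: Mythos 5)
Your proof is correct and follows essentially the same route as the paper's (which is very terse): Euler's formula plus face-size counts against the Heawood inequality $6\e<(H-2)(H-3)$ for parts 1 and 3, and for part 2 the observation that all of a second copy must sit in the closure of a single face, ruled out by a planarity edge count. Your version simply supplies the details — in particular the explicit $k=2,3$ analysis — that the paper leaves to the reader.
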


We call the latter graph \emph{$DK_{H(\e)}$}; it is also $K_{H(\e)+1}\setminus\{e\}$ for some edge $e$.

\begin{proof} Suppose that $K_{H(\e)}$ has a 2-cell embedding with at least one $s$-region where $s \ge H(\e)-1$. Then Euler's formula plus a count of edges on faces with multiplicities leads to a contradiction to Lemma~\ref{lem:2cellKn} in all cases except when there is precisely one $(H(\e)-1)$-region, $H(\e) = 3i+4$, $\e = (3i^2 + 3i)/2$, and all other faces are 3-regions.

Suppose $K_{H(\e)}$ embeds on $S_\e$ with every face having at most $H(\e)-2$ sides. No two additional vertices in different faces of $K_{H(\e)}$ can be adjacent. For $2 \le k \le 4$, $k$ mutually adjacent, additional vertices cannot form $K_{H(\e)}$ together with $H(\e)-k$ vertices on the boundary of a face.

Proofs of remaining parts follow easily from Euler's Formula and Lemma \ref{lem:2cellKn}. \end{proof}

If $V' \subseteq V(G)$, we denote by $G[V']$ the induced subgraph on the vertices in $V'$; for $E' \subseteq E(G)$, we denote by $G[E']$ the induced subgraph on the edge set $E'$. When $f$ is a face of an embedded $G$, we may also call the subgraph $G[E(f)]$ the boundary of $f$; that is, it may be convenient at times to think of the boundary of a face $f$ as a set $V(f) \cup E(f)$ and at other times as the subgraph $G[E(f)]$.

We restate two very useful corollaries of Thm. 6 in~ \cite{bohme99}. The first involves a case that is not covered in that theorem, but which follows easily from their proof. If $f$ is the infinite face of a connected plane graph, we call the boundary of $f$ the {\em outer boundary} of $G$, and when $G[E(f)]$ is a cycle, we call it the {\em outer cycle}. Without loss of generality we may suppose that for a connected plane graph the outer boundary is a cycle. 

\begin{cor}\label{cor:sixes}(\cite{bohme99}) Let $G$ be a connected plane graph with outer cycle $C$ that is a $k$-cycle with $k \le 6$. If every vertex of $G$ has a list of size at least 6, then a precoloring of $C$ extends to all of $G$ unless $k = 6$, there is a vertex in $V(G) \setminus V(C)$ that is adjacent to all vertices of $V(C)$, and its list consists of six colors that appear on the precolored $C$. 
\end{cor}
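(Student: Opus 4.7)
My plan is to obtain the corollary by leveraging~\cite[Thm.~6]{bohme99} for the case $k=6$ and reducing $k\le 5$ to that case through a short subdivision argument, consistent with the authors' note that the missing case follows easily from the proof in~\cite{bohme99}.

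For $k\le 5$, fix an edge $v_iv_{i+1}$ of $C$ and subdivide it with $6-k$ new vertices $u_1,\dots,u_{6-k}$, each given an arbitrary list of six colors. Extend the precoloring $\varphi$ of $C$ to these new vertices greedily, in the order $u_1,u_2,\dots,u_{6-k}$: when $u_j$ is colored, at most two of its neighbors on the new outer cycle $C^+$ (namely $u_{j-1}$, or $v_i$ if $j=1$, together with $v_{i+1}$ if $j=6-k$) are already colored, so at least four colors remain available in $L(u_j)$. The resulting plane graph $G^+$ has outer cycle $C^+$ of length exactly $6$, properly precolored, and every remaining vertex retains its list of size at least $6$.

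The key observation is that each $u_j$ has degree $2$ in $G^+$, so no vertex of $V(G^+)\setminus V(C^+) = V(G)\setminus V(C)$ is adjacent to any $u_j$, and in particular none is adjacent to all of $V(C^+)$. The $k=6$ obstruction therefore cannot occur in $G^+$, and~\cite[Thm.~6]{bohme99} produces a list-coloring of $G^+$ extending the enlarged precoloring. Restricting this coloring to $V(G)$ gives the desired extension of $\varphi$: the only edge of $G$ absent from $G^+$ is $v_iv_{i+1}$, and its endpoints are already colored differently by hypothesis, so properness is preserved.

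The main obstacle I anticipate is matching the $k=6$ conclusion of~\cite[Thm.~6]{bohme99} to the exact exception stated in the corollary. If that theorem is not phrased in precisely this form, I would follow its inductive proof on $|V(G)|$---reducing to a near-triangulation, splitting $G$ along any chord of $C$, and applying a Thomassen-style absorption of a low-degree interior vertex adjacent to $C$---and verify that at $k=6$ the induction can only fail in the presence of a single ``universal'' interior vertex whose list coincides with $\varphi(V(C))$, matching the stated exception. The subdivision reduction for $k\le 5$ is robust to any such rephrasing, since the inserted degree-$2$ vertices rule out the universal-vertex configuration in $G^+$ regardless of how it is formulated.
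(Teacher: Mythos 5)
The paper gives no proof of this corollary: it is stated as a restatement of \cite{bohme99}, Thm.~6, with the remark that the one case not covered there ``follows easily from their proof,'' so there is no detailed argument to compare yours against. With that caveat, your structure is sound but slightly misaligned with where the actual content lies. For $k\le 5$ your subdivision reduction is correct (the inserted degree-$2$ vertices do kill the universal-vertex obstruction in $G^+$, and deleting the edge $v_iv_{i+1}$ at the end is harmless since its endpoints are properly precolored), but it is also unnecessary: for $k\le 5$ a $6$-list already meets the $\max(5,k+1)$ threshold of \cite{bohme99}, Thm.~6 (restated as Cor.~\ref{cor:bms}), so the extension follows directly with no exception. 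The entire substance of Cor.~\ref{cor:sixes} is the case $k=6$ with $6$-lists, and this is precisely the case that Thm.~6 of \cite{bohme99} does \emph{not} cover (it demands $7$-lists there). Your first paragraph treats this case as something you can ``leverage'' from the cited theorem, which you cannot; your closing paragraph correctly identifies that one must instead rerun the Thomassen-style induction from \cite{bohme99} and check that with a precolored $6$-cycle and interior $6$-lists the only irreparable failure is a single interior vertex adjacent to all of $V(C)$ whose list is exactly $\varphi(V(C))$ --- but you do not carry out that verification. Since that verification \emph{is} the corollary, your proposal, read as a self-contained proof, leaves the essential step as a sketch; it is, however, exactly the same deferral the paper itself makes.
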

 
Then the results of Thm. 6 in~\cite{bohme99} together with Cor.~\ref{cor:sixes} give the next corollary.

\begin{cor}[\cite{bohme99}]\label{cor:bms} Let $G$ be a connected plane graph with outer cycle $C$ that is a $k$-cycle with $3 \le k \le 6$. If every vertex of $G$ has a list of size at least $max(5, k+1)$, then a precoloring of $C$ extends to all of $G$.
\end{cor}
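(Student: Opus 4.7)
The plan is to split into cases according to the length $k \in \{3,4,5,6\}$ of the outer cycle $C$ and to invoke the appropriate previously established result in each case.

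For $k \in \{3,4,5\}$, the hypothesis $\max(5,k+1)$ equals $5$, $5$, and $6$ respectively. These three sub-cases are exactly what is handled directly by Thm.~6 of~\cite{bohme99}, from which Cor.~\ref{cor:sixes} was extracted, so in these ranges the precoloring of $C$ extends to all of $G$ with no additional work.

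For the remaining case $k = 6$, every vertex of $G$ has a list of size at least $7$. Here I would invoke Cor.~\ref{cor:sixes}: since $k \le 6$ and lists have size at least $6$, either the precoloring extends, or there is an interior vertex $v \in V(G) \setminus V(C)$ adjacent to every vertex of $C$ whose list consists of the six colors appearing on the precoloring of $C$. Because $|L(v)| \ge 7$, no list at $v$ can consist of just six colors, so the second alternative cannot occur and the extension exists.

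The main (and essentially only) obstacle is conceptual: one must recognize that raising the list size from $6$ to $7$ in the boundary case $k = 6$ is precisely what kills the unique exceptional configuration left open by Cor.~\ref{cor:sixes}. Once this is observed, the entire argument reduces to quoting Thm.~6 of~\cite{bohme99} for $k \le 5$ and applying Cor.~\ref{cor:sixes} together with the cardinality bound $|L(v)| \ge 7 > 6$ for $k = 6$.
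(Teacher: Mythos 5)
Your proposal is correct and matches the paper's own (implicit) justification: the paper likewise derives this corollary by quoting Thm.~6 of~\cite{bohme99} for the smaller cases and using Cor.~\ref{cor:sixes} for $k=6$, where the list size $\ge 7$ rules out the single exceptional configuration. No gaps.
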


The next lemma is used repeatedly in the proof of Thms.~\ref{thm:manyvs} and~\ref{thm:thebiggie}. It is an extension of the similar result for 5-list-colorings in~\cite{bohme99}. The parameters are motivated by the ``Largest Face'' and $H(\e)$-list sizes from Table~\ref{tab:heawood}. 

\begin{lem}\label{lem:lem1a}
Let $H$ be a connected graph with a 2-cell embedding on $S_\e$, $\e > 0$, and let $f$ be a 2-cell $k$-region of $H$, $k \ge 3$. Let $G$ be a plane graph embedded within $f$ and let $G_f$ be a simple, connected graph that consists of $G$, $H[E(f)]$, and edges joining $V(G)$ and $V(f)$ so that $G_f$ is embedded in the closure of $f$. 
Let $P = \{v_1, \ldots, v_j\}$ be a subset of $V(G_f)$ satisfying $dist(P) \ge 3$. Then if every vertex of $G_f$ has an $\ell$-list except that the vertices of $P$ each have a 1-list, every proper precoloring of $H[E(f)]$ extends to a list-coloring of $G_f$ provided that no vertex of $P$ is adjacent to a vertex of $V(f)$ with the same color as its 1-list, and 
\begin{enumerate}
\item\label{a1} $k = 3$ and $\ell \ge 6$, 
\item\label{a2} $k \ge 4$ and $\ell \ge k+2$, or
\item\label{a3} $k = 6$ or $k \ge 9$, $\ell = k+1$, and there is no vertex $x$ adjacent to $k+1$ vertices of $V(f) \cup \{v_i\}$, for some $i = 1, \ldots, j$, with $x$'s list consisting of $\ell = k+1$ colors that all appear on $V(f) \cup \{v_i\}$.
\end{enumerate}
\end{lem}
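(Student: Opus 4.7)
The plan is a two-phase argument. In Phase~1 I reduce the problem to a pure precoloring-extension question on a plane graph with no precolored interior vertex. Since $dist(P) \ge 3$, the vertices of $P$ are pairwise nonadjacent with pairwise disjoint neighborhoods, so for each $v_i \in P$ I color $v_i$ with its $1$-list color and then delete $v_i$ from $G_f$, removing its color from the list of each neighbor. The hypothesis that no vertex of $P$ is adjacent to a vertex of $V(f)$ with a conflicting color makes this consistent with the given precoloring of $H[E(f)]$, and $dist(P) \ge 3$ prevents the reductions at different $v_i$ from interfering with each other. What remains is a plane graph $G_f^\ast$ with facial walk $W_f$ of length $k$ as its outer boundary, properly precolored from $H[E(f)]$, with interior lists of size at least $\ell-1$ at each neighbor of $P$ and at least $\ell$ elsewhere. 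When $W_f$ has repeated vertices or edges I first duplicate them, as illustrated in Figure~\ref{fig:2cell}, so that the outer boundary of $G_f^\ast$ is a simple $k$-cycle and a standard precoloring-extension result can be applied.

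In Phase~2 I apply the appropriate extension theorem in each case. For case~(\ref{a1}), $k=3$ and $\ell \ge 6$, interior lists have size at least $5$ and the extension follows from Thomassen's theorem~\cite{thomassen94} on $5$-list-colorings of plane graphs with a precolored triangular outer face. For case~(\ref{a2}), $k \ge 4$ and $\ell \ge k+2$, interior lists of $G_f^\ast$ have size at least $k+1 = \max(5,k+1)$; for $4 \le k \le 6$ this is exactly Corollary~\ref{cor:bms}, while for $k \ge 7$ I proceed by induction on $|V(G_f^\ast)|$, either deleting an interior vertex of degree at most $k+1$ (whose reduced list exceeds its remaining degree and so admits a free color) or splitting $f$ along a chord of $W_f$ into two smaller regions to which the inductive hypothesis applies. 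For case~(\ref{a3}), $k=6$ or $k \ge 9$ with $\ell = k+1$, interior lists of $G_f^\ast$ have size at least $k$; for $k=6$ I apply Corollary~\ref{cor:sixes}, observing that the side hypothesis excluding a vertex $x$ adjacent to the $k+1$ vertices of $V(f) \cup \{v_i\}$ with an $\ell$-list contained in the set of colors on $V(f)\cup\{v_i\}$ is precisely what rules out the one bad configuration in Corollary~\ref{cor:sixes} once $v_i$'s color has been stripped from $x$'s list in Phase~1. For $k \ge 9$ in case~(\ref{a3}) I again induct on $|V(G_f^\ast)|$, using chord-splitting as in case~(\ref{a2}).

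The main obstacle is the induction for the large-$k$ subcases, case~(\ref{a2}) with $k \ge 7$ and case~(\ref{a3}) with $k \ge 9$, where neither Corollary~\ref{cor:bms} nor Corollary~\ref{cor:sixes} applies directly. When splitting $f$ along a chord of $W_f$ into two subregions $f_1$ and $f_2$, I must verify that each restricted instance still satisfies the lemma's hypotheses, preserving the distance-$3$ condition on $P$, the appropriate list sizes on the new boundary edge, and, in case~(\ref{a3}), the absence of the exceptional vertex $x$. Combined with the vertex- and edge-duplication bookkeeping required when $W_f$ is not a simple cycle, this is where the argument will require the most care.
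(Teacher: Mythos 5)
Your Phase 1 (excising the vertices of $P\setminus V(f)$ and then performing vertex- and edge-duplication so that the boundary becomes a simple $k$-cycle) and your treatment of the small cases ($k=3$ via Thomassen/Cor.~\ref{cor:bms}, $4\le k\le 6$ via Cor.~\ref{cor:bms}, $k=6$, $\ell=7$ via Cor.~\ref{cor:sixes} with the part~\ref{a3} exclusion absorbing the bad configuration) agree with the paper. The gap is in the cases $k\ge 7$ with $\ell\ge k+2$ and $k\ge 9$ with $\ell=k+1$, which you defer to an induction on $|V(G_f^*)|$ driven by ``delete a low-degree interior vertex or split along a chord of $W_f$.'' First, the chord option is vacuous: by the definition of $G_f$, its edge set consists of edges of $G$, edges of $H[E(f)]$, and edges joining $V(G)$ to $V(f)$, so there are no chords between two vertices of $V(f)$ to split along. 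Second, the remaining degeneracy argument is not shown to terminate: an interior vertex adjacent to all $k$ vertices of $V(f)$, to an excised $v_i$, and to other interior vertices has degree at least $k+2$ against a reduced list of size $k+1$ (or $k$ in part~\ref{a3}), so ``list exceeds remaining degree'' can fail, and you give no deletion order or planarity count ruling this out. You correctly flag this as the hard part, but no argument is supplied, so the main content of the lemma for large $k$ is missing.

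The paper closes exactly this gap with a different and complete device. If some vertex $x$ of $V(G)$ is adjacent to at least $k-3$ (resp.\ $k-4$ in part~\ref{a3}) vertices of $V(f)$, then $x$ can be colored first (using the part~\ref{a3} exclusion in the tight case), and $G_f[E(f)\cup E_f(x)]$ cuts $f$ into regions of size at most $6$ (resp.\ $7$), each handled by the already-proved small cases since the remaining interior lists have size at least $9$ (resp.\ $10$). Otherwise every interior vertex is adjacent to at most $k-4$ (resp.\ $k-5$) vertices of $V(f)$; deleting the colors of its $V(f)$-neighbors from its list leaves at least $6$ colors, so after excising $P$ the purely interior graph has $5$-lists and is colored by Thomassen's planar $5$-list-coloring theorem, compatibly with the boundary. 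You would need to import this dichotomy (or supply a correct substitute) for your proof to go through.
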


\begin{proof} 

Note that $G_f[E(f)] = H[E(f)]$. Also note that the condition\linebreak $dist(P) \ge 3$ guarantees that no vertex of $G_f$ is adjacent to more than one $v_i$. For $v_i \in P \setminus V(f)$, we say that we {\em excise} $v_i$ if we delete it and delete its color from the list of colors for each neighbor that is not precolored. The proof has three cases that together prove parts 1-3 of the lemma.

Case A. Assume $k = 3$ and $\ell \ge 6$, $4 \le k \le 6$ and $\ell \ge k+2$, or $k = 6$ and $\ell = 7$. In these cases first we excise the vertices of $P \setminus V(f)$ so that every remaining vertex of $G$ has a list of size at least 5 for $k = 3$, of size at least $k+1$ for $k = 4, 5, 6$, or else of size at least 6 when $k = 6$.

In the following we may need to do some surgery, perhaps repeatedly, on the face $f$ and its boundary, so that we can apply Cor.~\ref{cor:bms}. First, more easily, when $f$ is a 2-cell $k$-region on which lies no repeated vertex, then $G_f$ is a plane graph with outer cycle a $k$-cycle, $k \le 6$. By Cor.~\ref{cor:bms} a precoloring of $G_f[E(f)]$ extends to $G_f \setminus P$ and this coloring extends to all of $G_f$ unless there is a vertex $x$ with a 6-list, adjacent to six vertices of $V(f)$ with the six colors of $x$'s list. If $x$'s list was decreased to a 6-list, $x$ was adjacent to some vertex $v_i$, but this situation is disallowed by hypothesis in part 3.

Otherwise in a traversal of $W_f$ we visit a vertex more than once and may travel along an edge twice. In the former case, each time we revisit a vertex $x$, we can split that vertex in two, into $x_1$ and $x_2$, and similarly divide the edges incident with $x$ so that the face $f$ is expanded to become the new face $f'$, still a $k$-region, and the graph $G_f$ becomes $G_{f'}$ which is naturally embedded in the closure of $f'$ and contains the same adjacencies. Now there is one more vertex in $V(f')$ and the same set of edges $E(f') = E(f)$ on the boundary and in the boundary subgraph $G_{f'}[E(f')]$. A precoloring of $G_f[E(f)]$ gives a precoloring of $G_{f'}[E(f')]$ in which vertices $x_1$ and $x_2$ receive the same color; we call this procedure {\em vertex-duplication}. In the latter case, when we revisit an edge $e = (y, y')$, we may visit both of its endpoints twice or one endpoint twice and the other just once. We similarly duplicate the edge $e=(y, y')$ by duplicating one or both of its endpoints and splitting $e$ into two new edges $e_1$ and $e_2$. Then we divide the other edges incident with $e$ so that $G_f$ becomes $G_{f'}$ which is naturally embedded in the closure of the new face $f'$, still a $k$-region, but now with one or two more vertices in $V(f')$, the same number of edges in $E(f')$ and in $G_{f'}[E(f')]$, and with one less duplicated edge in $W_{f'}$. A precoloring of $G_f[E(f)]$ gives a precoloring of $G_{f'}[E(f')]$ in which duplicated vertices receive the same color; we call this procedure {\em edge-duplication}. We note that in both duplications there cannot be a vertex $x$ that is adjacent to both copies of a duplicated vertex (since $G_f$ is a simple graph). As an example, the first graph in Fig.~\ref{fig:2cell} 
shows a 2-cell face that is a 13-region, in which vertices 3, 6, 7, and 8, are repeated, and edges 3-6 and 8-9 are repeated. Vertex- and edge-duplication produces the second graph, which has a new face that is a 13-region and whose facial walk is a cycle given by 1-8-9-8$'$-7-2-3-6-5-7$'$-4-6$'$-3$'$-1.

In all cases after vertex- and edge-duplication, the 2-cell $k$-region $f$ becomes a 2-cell $k$-region $f^*$ with no repeated vertex or edge on the outer boundary. $G_f$ has been transformed into a plane graph $G_{f^*}$ with outer cycle, $G_{f^*}[E(f^*)]$, of length $k \le 6$. The precoloring of $G_f[E(f)]$ has become a precoloring of $G_{f^*}[E(f^*)]$ with duplicated vertices receiving the same color. Then by Cor.~\ref{cor:bms}, the precoloring of $G_{f^*}[E(f^*)]$ extends to $G_{f^*} \setminus P$ and so the precoloring of $G_f[E(f)]$ extends to $G_f \setminus P$ and to all of $G_f$ since the exceptional case of part 3 cannot occur. (Since $f^*$ is at most a 6-region and has a duplicated vertex, it is a $t$-vertex-region for some $t < 6$, and there cannot be a vertex adjacent to six vertices of $V(f^*)$.) 

Case B. Suppose $k \ge 7$ and $\ell \ge k+2$ so that in all cases $\ell \ge 9$. For $v \in V(G)$, let $E_f(v)$ denote the set of edges joining $v$ with a vertex of $V(f)$. Suppose there is a vertex $x$ of $V(G)$ that is adjacent to at least $k-3$ vertices of $V(f)$. If $x = v_i$ for some $i, 1 \le i \le j$, then $G_f[E(f) \cup E_f(v_i)]$ can be properly colored by assumption. If $x \ne v_i$ for any $i, 1 \le i \le j$, then $x$ is adjacent to either one or no vertex $v_i$, and since $x$ has an $\ell$-list, $\ell \ge k+2$, the coloring of $G_f[E(f) \cup E_f(v_i)]$ (respectively, $G_f[E(f)]$) extends to $x$. In all cases $G_f[E(f) \cup E_f(x)]$ divides $f $ into regions of size at most 6, and the coloring of $G_f[E(f) \cup E_f(x)]$ extends to the interior of each $s$-region, $3 \le s \le 6$, by Case A since interior vertices, other than the $v_i$, have 9-lists.

Otherwise every vertex $x$ in $G $ is adjacent to at most $k-4$ vertices of $V(f)$. For each such vertex $x$ we delete from its list the colors of $V(f)$ to which it is adjacent. This may reduce the list for $x$ to one of size six or more. Next we excise the vertices of $P$ in $G \setminus V(f)$, resulting in the planar graph $G \setminus P$ with every vertex having a list of size at least five, which can be list-colored by \cite{thomassen94}. This list-coloring is compatible with the precoloring of $G_f[E(f)]$ and extends to $P$ and so to all of $G_f$.

Case C. The case of $k = 6, \ell = 7$ was covered in Case A.
 Suppose that $k \ge 9$ and $\ell = k+1 \ge 10$. Suppose there is a vertex $x$ of $V(G)$ that is adjacent to at least $k-4$ vertices of $V(f)$. As before, if $x = v_i$ for some $i, 1 \le i \le j$, then $G_f[E(f) \cup E_f(v_i)]$ can be properly colored by assumption. If $x \ne v_i$ for any $i, 1 \le i \le j$, then $x$ is adjacent to one or no vertex $v_i$, and the coloring of $G_f[E(f) \cup E_f(v_i)]$ (resp., $G_f[E(f)]$) extends to $x$ in all cases unless (since $\ell = k+1$) $x$ is adjacent to all vertices of $V(f) \cup \{v_i\}$ for some $i, 1 \le i \le j$, and $x$'s list consists of $\ell$ colors all appearing on $V(f) \cup \{v_i\}$. We have disallowed this case. Now $G_f[E(f) \cup E_f(x)]$ forms a graph that consists of triangles and $s$-regions with $s \le 7$. The coloring of $G_f[E(f) \cup E_f(x)]$ extends to the interior of each region by the previous cases, since $\ell \ge 10$.
 
Otherwise every vertex $x$ of $G$ is adjacent to at most $k-5$ vertices of $V(f)$, and we proceed as in the proof of Case B by decreasing the lists of vertices adjacent to $V(f)$ and excising all the $v_i$ to create a planar graph with every vertex having at least a 5-list. The resulting graph is list-colorable with a coloring compatible with that of $G_f[E(f)]$ and extending to $G_f$. \end{proof}
 
\section{Results on $K_n$ genus surfaces}

Most parts of the proof of the next lemma are clear; these results are used repeatedly in the proof of the main results.

\begin{lem}\label{lem:Kn}
\begin{enumerate}

\item\label{c1} Suppose at most one vertex of $K_n$ has a 1-list, at least one vertex has an $n$-list, and the remaining vertices have $(n-1)$-lists or $n$-lists. Then $K_n$ can be list-colored.
\item\label{c2} If one vertex of $DK_n$ has a 1-list and all other vertices have $n$-lists, then $DK_n$ can be list-colored. 
\item \label{c3} If at most six vertices of $DK_n$, $n 
\ge 7$, have lists of size $n-1$ and all others have $n$-lists, then $DK_n$ can be list-colored. 

\end{enumerate}
\end{lem}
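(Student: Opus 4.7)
For Part 1, the plan is a direct greedy coloring. I would order the vertices with the (at most one) 1-list vertex first, then the $(n-1)$-list vertices, and finally the $n$-list vertices. Since at least one vertex has an $n$-list and at most one has a 1-list, there are at most $n-2$ vertices with $(n-1)$-lists, so every $(n-1)$-list vertex sits at position at most $n-1$ in the order and still has a color in its list avoiding the at most $n-2$ colors already used; similarly for the $n$-list vertices at the end.

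For Part 2, let $u,v$ be the nonadjacent pair of $DK_n$ and $w$ the 1-list vertex. If $w\in\{u,v\}$, say $w=u$, then coloring $u$ leaves a $K_n$ on $\{v\}\cup\text{middle}$ in which $v$ still has its full $n$-list (since $u,v$ are nonadjacent) and all middle vertices have lists of size $\ge n-1$, so Part 1 applies directly. If $w$ lies in the middle, I would first use Part 1 on the middle $K_{n-1}$ (with $w$ the 1-list vertex and the other $n-2$ middle vertices bringing $n$-lists that comfortably satisfy the $(n-1)$-list hypothesis), and then extend to $u$ and $v$, each of which has an $n$-list against only $n-1$ used middle colors.

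For Part 3, let $S$ be the set of $(n-1)$-list vertices. The plan is to color the middle $K_{n-1}$ first, producing a set $M$ of $n-1$ distinct colors, and then extend to $u$ and $v$. Extending to $x\in\{u,v\}$ is automatic when $x\notin S$, since then $|L(x)|=n>|M|$; if $x\in S$ it reduces to the requirement $M\ne L(x)$, i.e., the middle coloring uses a color outside $L(x)$. The hypotheses $n\ge 7$ and $|S|\le 6$ guarantee at least $n-7+|S\cap\{u,v\}|$ big (that is, $n$-list) vertices in the middle, giving at least one when $|S\cap\{u,v\}|\ge 1$ and at least two when $|S\cap\{u,v\}|=2$. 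I would pick one (or two) such big middles $b$ (or $b_1,b_2$), assign them colors drawn from $L(b_i)\setminus L(u)$ or $L(b_i)\setminus L(v)$ as needed to plant a color in $M$ outside $L(u)$ (and $L(v)$, when required), and then color the remaining middle---a $K_{n-2}$ or $K_{n-3}$ whose lists have shrunk by at most one or two colors---greedily, using that a complete graph $K_m$ is $m$-choosable.

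The main technical obstacle arises in the subcase of Part 3 where both $u,v\in S$ and $L(u)\ne L(v)$, since I then need distinct colors $c_1\in L(b_1)\setminus L(u)$ and $c_2\in L(b_2)\setminus L(v)$. Each of these differences has size at least $1$, so the obstruction is only when both reduce to equal singletons $\{c^*\}$, which forces $L(b_1)=L(u)\cup\{c^*\}$ and $L(b_2)=L(v)\cup\{c^*\}$ with $c^*\notin L(u)\cup L(v)$. In this exceptional case I would swap roles and take $c_1\in L(u)\setminus L(v)$ and $c_2\in L(v)\setminus L(u)$; both sets are nonempty because $L(u)$ and $L(v)$ are distinct $(n-1)$-sets, the two choices are automatically distinct (lying in disjoint sets), they lie in $L(b_1)$ and $L(b_2)$ respectively (since $L(u)\subseteq L(b_1)$ and $L(v)\subseteq L(b_2)$), and they still leave $M$ different from both $L(u)$ and $L(v)$.
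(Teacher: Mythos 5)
Your proof is correct. Parts~1 and~2 match what the paper treats as clear (greedy orderings), and your Part~3 is sound but takes a genuinely different route from the paper's. The paper deletes one of the two degree-$(n-1)$ vertices, colors the resulting $K_n$ via Part~1, and then tries to extend to the deleted vertex $y$; in the hard case where $L(x)$ and $L(y)$ are disjoint and the shared $K_{n-1}$ happens to receive exactly the colors of $L(y)$, it performs an after-the-fact recoloring swap (moving the color of $x$ onto a suitable middle vertex to free a color for $y$). You instead color the shared $K_{n-1}$ first, choosing the colors of one or two $n$-list middle vertices with foresight so that the resulting color set $M$ is guaranteed to differ from $L(u)$ and from $L(v)$; both degree-$(n-1)$ vertices then extend independently since they are nonadjacent. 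Your plant-ahead version is symmetric in $u$ and $v$ and avoids the recoloring case analysis, at the cost of a small case split on $|S\cap\{u,v\}|$; the paper's version is shorter on the page because it reuses Part~1 as a black box. Your treatment of the one genuine obstruction (both difference sets reducing to the same singleton $\{c^*\}$) is correct; note that in that case one could even more simply assign $c^*$ itself to $b_1$, since there $c^*\notin L(u)\cup L(v)$, so a single planted color certifies both extensions.
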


\begin{proof} We include the proof of part~\ref{c3}. Suppose that one of the two vertices of degree $n-1$, say $x$, has an $n$-list. Then $K_n$ = $DK_n \setminus \{x\}$ has at most six vertices with $(n-1)$-lists and can be list-colored since $n \ge 7$. This coloring extends to $x$ which has an $n$-list and is adjacent to $n-1$ vertices of the colored $K_n$. Otherwise both vertices of degree $n-1$, say $x$ and $y$, have $(n-1)$-lists, $L(x)$ and $L(y)$ respectively. Suppose there is a common color $c$ in $L(x)$ and $L(y)$. Then coloring $x$ with $c$ extends to a coloring of $K_n = DK_n \setminus \{y\}$ after which $y$ can also be colored with $c$. 
Otherwise $L(x)$ and $L(y)$ are disjoint. Suppose that when $DK_n \setminus \{y\}$ is list-colored, the colors on $K_{n-1} = DK_n \setminus \{x, y\}$ are precisely the $n-1$ colors of $L(y)$ so that the coloring does not extend. If there is some vertex $z$ of $K_{n-1}$ with an $n$-list that contains a color not in $L(y)$ and different from the color $c_x$ used on $x$, we use $c_x$ on $z$, freeing up the previous color of $z$ for $y$. Otherwise, for every $z$ with an $n$-list, that list equals $L(y) \cup \{c_x\}$. Besides these vertices of $K_{n-1}$ with prescribed $n$-lists, there are at most four others in $K_{n-1}$ which have $n-1$ lists. These four vertices might be colored with colors from $L(x)$, but that still leaves at least one color $c_x' \ne c_x$ in $L(x)$ that has not been used. We change the color of $x$ to $c_x'$ and the color of one of the $n$-list vertices of $K_{n-1}$ to $c_x$, thus freeing up that vertex's previous color to be used on $y$.
\end{proof}

\begin{thm}\label{thm:easy}
Suppose $G$ embeds on $S_\e$, $\e > 0$, and does not contain $K_{H(\e)}$. Then when every vertex of $G$ has an $H(\e)$-list except that the $j$ vertices of $P = \{v_1, \ldots, v_j$\}, $j \ge 0$, have 1-lists and $dist(P) \ge 3$, then $G$ is list-colorable.
\end{thm}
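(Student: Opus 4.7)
\medskip
\noindent\textbf{Proof proposal.} The plan is to reduce the statement directly to Thm.~\ref{thm:dirac} by excising the precolored vertices. Since $H(\e)\ge 6$ for every $\e\ge 1$, the lists will remain large enough after excision, and the distance hypothesis on $P$ will ensure both that the reduction is well-defined and that the combined coloring is proper.

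First I would dispose of the trivial case $j=0$: every vertex has an $H(\e)$-list, and $G$ does not contain $K_{H(\e)}$, so Thm.~\ref{thm:dirac} immediately yields an $(H(\e)-1)$-list-coloring (use any sublist of size $H(\e)-1$ at each vertex), hence an $H(\e)$-list-coloring.

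For the case $j\ge 1$, set $G' := G\setminus P$, and for each vertex $u\in V(G')$ that is adjacent to some $v_i\in P$, delete the color precolored on $v_i$ from $u$'s list. The key observation is that because $dist(P)\ge 3$, no vertex of $G'$ has two neighbors in $P$, so each $u\in V(G')$ loses at most one color. Thus every vertex of $G'$ inherits a list of size at least $H(\e)-1$. Since $G'$ is a subgraph of $G$, it does not contain $K_{H(\e)}$, so Thm.~\ref{thm:dirac} (after arbitrarily trimming each list to size exactly $H(\e)-1$) produces a list-coloring $\varphi$ of $G'$ from the reduced lists.

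Finally I would extend $\varphi$ to all of $G$ by keeping the precoloring on $P$. Adjacent vertices within $P$ cannot exist because $dist(P)\ge 3 > 1$, so the precoloring on $P$ is itself a proper coloring of $G[P]$. For each edge $uv_i$ with $u\in V(G')$ and $v_i\in P$, the reduced list of $u$ excluded the color of $v_i$, so $\varphi(u)$ differs from the color of $v_i$. Hence the combined assignment is a proper list-coloring of $G$. The argument is essentially a one-line consequence of Thm.~\ref{thm:dirac}, and I do not foresee any real obstacle; the only thing to verify is that the excision produces lists of size at least $H(\e)-1$, which is exactly what the hypothesis $dist(P)\ge 3$ guarantees.
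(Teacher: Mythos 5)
Your proposal is correct and is essentially identical to the paper's proof: the paper also excises the vertices of $P$, notes that $dist(P)\ge 3$ leaves every remaining vertex with a list of size at least $H(\e)-1$, and invokes Thm.~\ref{thm:dirac} on the resulting graph before extending back to $P$. No substantive difference.
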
 

\begin{proof} Let $G$ embed on $S_\e$, $\e > 0$, and suppose $G$ does not contain $K_{H(\e)}$. We excise the vertices of $P = \{v_1, \ldots, v_j\}$, if present, leaving a graph with all vertices having at least $(H(\e)-1)$-lists since $dist(P) \ge 3$. By~\cite{bohme99, kral06}, the smaller graph can be list-colored, and that list-coloring extends to $G$.
\end{proof}

In particular this result holds for all graphs on the Klein bottle since $K_7$ does not embed there. The first value not covered by the next theorem is $\e = 3$ with $H(\e) = 7$. 

\begin{thm}\label{thm:manyvs}

Suppose $G$ has a 2-cell embedding on $S_\e$, $\e > 0$, and contains $K_{H(\e)}$. Then when every vertex of $G$ has an $H(\e)$-list except that the $j$ vertices of $P = \{v_1, \ldots, v_j$\}, $j \ge 0$, have 1-lists, $G$ is list-colorable provided that 
$\e$ is of the form $\e = (3i^2 - i)/2$, $(3i^2 + i)/2$, or $(3i^2 + 3i + 2)/2$, for some $i \ge 1$, and $dist(P) \ge 4$.

\end{thm}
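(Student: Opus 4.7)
The plan is to fix a copy $K := K_{H(\e)}$ inside $G$ and reduce the problem to independent face extensions. Because $\e$ is of one of the three stated forms, $\e = I(H(\e))$, so $S_\e$ is the genus surface of $K_{H(\e)}$; hence every embedding of $K$ on $S_\e$---in particular the one induced by the embedding of $G$---is $2$-cell, and by Lemma~\ref{lem:2cellKn} every face of this induced $K$-embedding is a $3$-region in the first two cases, while in the third case the face sizes are at most $5$. I would first properly list-color $K$ in a way compatible with the precoloring of $P$, and then extend the coloring into each face of $K$ using Lemma~\ref{lem:lem1a}.

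The structural engine is the following consequence of $dist(P)\ge 4$ together with the fact that $K$ has diameter $1$. At most one vertex of $P$ can lie in $V(K)$; call it $v_0$ if present. At most one vertex $v^*\in P\setminus V(K)$ is adjacent in $G$ to any vertex of $V(K)$, since two such vertices would lie within distance $3$ of each other. Moreover, $v_0$ and $v^*$ cannot coexist, because $v^*$ being adjacent to some $u\in V(K)$ would force $dist(v_0,v^*)\le 2$. Any such $v^*$ lives inside a single face $f^*$ of $K$, so its $V(K)$-neighbors are contained in $V(f^*)$, which has at most $3$ vertices in the first two cases and at most $5$ in the third.

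To color $K$, I would delete $v^*$'s singleton color (if $v^*$ exists) from $L(u)$ for each $u\in V(K)$ adjacent to $v^*$, reducing any such $L(u)$ to size at least $H(\e)-1$; every other vertex of $V(K)\setminus\{v_0\}$ keeps its full $H(\e)$-list. Since at most $5$ lists are reduced, and since $H(\e)\ge 6$ when the face bound is $3$ while $H(\e)\ge 8$ when the face bound is $5$, at least one vertex of $V(K)$ still carries a full $H(\e)$-list. The resulting lists---at most one $1$-list, at least one $H(\e)$-list, and the rest of size $H(\e)$ or $H(\e)-1$---meet the hypotheses of part~\ref{c1} of Lemma~\ref{lem:Kn}, giving a proper list-coloring of $K$. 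By the deletions, every $V(K)$-neighbor of $v^*$ then receives a color distinct from $v^*$'s color.

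With $K$ colored, I would apply Lemma~\ref{lem:lem1a} to each face $f$ of the $K$-embedding, with the role of $H$ in that lemma played by $K$ and $G_f$ taken as the closure of $f$ in $G$. The induced precoloring of $K[E(f)]$ is proper; the preparation of $K$'s lists guarantees that no $P$-vertex in $V(G_f)$ is adjacent to a $V(f)$-vertex of its own color; and $dist_{G_f}(P\cap V(G_f))\ge dist_G(P)\ge 4$ because $G_f\subseteq G$. For $k=3$ part~\ref{a1} of Lemma~\ref{lem:lem1a} applies with $\ell=H(\e)\ge 6$, and for $k\in\{4,5\}$ (which only occur in the third case, where $H(\e)\ge 8$) part~\ref{a2} applies with $\ell\ge k+2$. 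Pasting the resulting extensions along $K$ produces a list-coloring of $G$. The main obstacle---and the reason the hypothesis restricts $\e$ to the three stated forms---is the list-reduction accounting in the previous paragraph: on non-genus surfaces a face of $K$'s embedding can be as large as an $(H(\e)-1)$-region, which would allow a single interior $v^*$ to trim essentially every list in $V(K)$ and void part~\ref{c1} of Lemma~\ref{lem:Kn}.
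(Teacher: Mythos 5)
Your proposal is correct and follows essentially the same route as the paper: exploit that $S_\e$ is the genus surface of $K_{H(\e)}$ so its induced embedding is $2$-cell with faces of size at most $5$, color $K_{H(\e)}$ together with the at most one nearby precolored vertex via Lemma~\ref{lem:Kn}.\ref{c1}, and extend into each face via Lemma~\ref{lem:lem1a}. The only (harmless) difference is bookkeeping: you bound the number of trimmed lists by the face size of $f^*$, whereas the paper bounds the degree of $v^*$ into $K_{H(\e)}$ by $H(\e)-1$ using the non-embeddability of $K_{H(\e)+1}$.
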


\begin{proof}
We know that $K_{H(\e)}$ necessarily has a 2-cell embedding on $S_\e$ for $\e = 1, 4$ as does $K_7$ on {\Large$ \tau$}. ($K_6$ and $K_7$ may or may not have 2-cell embeddings on {\Large $\kappa$} and on $S_3$, respectively.)

The values $\e = (3i^2 - i)/2$, $(3i^2 + i)/2$, or $(3i^2 + 3i + 2)/2$ for some $i \ge 1$ are those for which $K_{H(\e)}$ necessarily has a 2-cell embedding on $S_\e$; they give the value of the genus surface of $K_{H(\e)}$ for each of the modulo 3 classes of $H(\e)$. Since $dist(P) \ge 4$, at most one vertex $v_k \in P$ is in or is adjacent to a vertex of $K_{H(\e)}$ (but not both), and in the latter case $v_k$ is adjacent to at most $H(\e)-1$ vertices of the complete graph since $K_{H(\e)+1}$ does not embed on $S_\e$. Thus in all cases $K_{H(\e)} \cup P$ can be list-colored by Lemma~\ref{lem:Kn}.\ref{c1}. When $\e = 1$, $H(\e)$ = 6, and $K_6$ embeds as a triangulation on $S_1$. When $\e > 1$, if $\e = (3i^2 - i)/2$ or $(3i^2 + i)/2$, $K_{H(\e)}$ embeds as a triangulation, and if $\e=(3i^2 + 3i + 2)/2$, $K_{H(\e)}$ embeds with the largest face size at most five, and in all cases $H(\e) \ge 7$. Hence we apply Lemma~\ref{lem:lem1a} for $\e \ge 1$ 
to see that the list-coloring of $K_{H(\e)}$ extends to the interior of each of its faces and so $G$ is list-colorable. 
\end{proof}

A similar proof would show that when the orientable surface $S_\e$ with $\e$ even is the orientable genus surface for $K_{H(\e)}$ (i.e., when $\e$ is even and gives the least Euler genus such that $K_{H(\e)}$ embeds on orientable $S_\e$), then for every $G$ with a 2-cell embedding on orientable $S_\e$ and containing $K_{H(\e)}$ the same list-coloring result holds. The first corollary of Section 1 also follows easily.

\begin{proof}[Proof of Cor.~\ref{cor:firstwidth}]
Suppose $H(\e) = 3i+3, i \ge 1$. If $\e = (3i^2 - i)/2$, then $K_{H(\e)}$ embeds with $f = (i+1)(3i+2)$ faces by Lemma~\ref{lem:2cellKn}.\ref{e1}. $K_{H(\e)}$ contains $(3i+3)(3i+2)(3i+1)/6$ 3-cycles, more than the number of faces so that $K_{H(\e)}$ embeds with a noncontractible 3-cycle. Thus in this case $G$ cannot contain $K_{H(\e)}$ and by Thm.~\ref{thm:easy}, $G$ can be list-colored. If $\e = (3i^2 - i + 2)/2$, \ldots, or $(3i^2 + i - 2)/2$, then $K_{H(\e)}$ embeds with fewer than $f = (i+1)(3i+2)$ faces and so the same result holds.

When $H(\e) = 3i+4$ or $3i+5, i \ge 1$, an analogous proof shows that $G$ cannot contains $K_{H(\e)}$ and so is list-colorable. 

To see that distance at least 3 is best possible for the precolored vertices, take a vertex $x$ with a $k$-list $L(x)$ and attach $k$ pendant edges to vertices, precolored with each of the colors of $L(x)$. 
\end{proof}

\section{All surfaces}

First we explore some topology of surfaces and non-2-cell faces of embedded graphs. Cycles on surfaces (i.e., simple closed curves on the surface), for both orientable and nonorientable surfaces, are of three types: contractible and surface-separating, noncontractible and surface-separating, and noncontractible and surface-nonseparating. (When the meaning is clear, we suppress the prefix ``surface.'') A non-2-cell face of an embedded graph must contain a noncontractible surface cycle within its interior. For example, in the second graph in Fig.~\ref{fig:2cell}, the shaded region is a 2-cell face, and the unshaded region is a non-2-cell face that contains a noncontractible and nonseparating cycle. 
(For a more detailed discussion see Chapters 3 and 4 of~\cite{mt}.)

Suppose $f$ is a non-2-cell face of $K_{H(\e)}$ embedded on $S_\e$. We repeatedly ``cut" along simple noncontractible surface cycles that lie wholly within the face $f$ until the ``derived'' face or faces become 2-cells. Each ``cut" is replaced with one or two disks, creating a new surface, and with each ``cut" $K_{H(\e)}$ stays embedded on a surface $S_{\e'}$ with $\e' < \e$. 
Below we explain this surface surgery and count the number of newly created faces, called {\em derived} faces in the surgery.

\begin{lem}\label{lem:2cellface} Suppose $K_{H(\e)}$ embeds on $S_\e$, $\e > 0$. Then the largest possible 2-cell face in the embedding is an $(H(\e)-1)$-region. 
\end{lem}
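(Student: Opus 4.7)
My plan is to reduce to the previously analyzed 2-cell embedding case by performing the surface surgery described in the paragraph preceding the lemma, carried out only on the non-2-cell faces so that the given 2-cell face $f$ is left untouched. Write $|f|$ for the size of $f$. If every face of the embedding of $K_{H(\e)}$ on $S_\e$ is already a 2-cell, then Lemma~\ref{lem:DKn}.\ref{b1} applies directly and gives $|f| \le H(\e) - 1$, so we may assume that at least one face $g \neq f$ is not a 2-cell.

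The key step is then to perform the surgery only inside the non-2-cell faces. In each non-2-cell face $g$, a simple noncontractible surface curve lies in the interior of $g$; cutting along it and capping the resulting boundary with one or two disks leaves $K_{H(\e)}$ embedded on a surface of strictly smaller Euler genus, replaces $g$ with one or two topologically simpler faces, and preserves every other face of the embedding, with the same incidences and face sizes. In particular $f$ survives unchanged as a 2-cell face of the same size. Iterating terminates with a 2-cell embedding of $K_{H(\e)}$ on some surface $S_{\e'}$ with $\e' \le \e$, in which $f$ still appears with size $|f|$. Applying Lemma~\ref{lem:DKn}.\ref{b1} to this derived embedding yields $|f| \le H(\e')-1$. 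Since $K_{H(\e)}$ is embedded on $S_{\e'}$, the definition of the Heawood number as the largest $n$ with $K_n \hookrightarrow S_{\e'}$ forces $H(\e') \ge H(\e)$, while $\e' \le \e$ together with the monotonicity of $H$ forces $H(\e') \le H(\e)$. Hence $H(\e') = H(\e)$, and $|f| \le H(\e)-1$ as required.

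The main obstacle will be to justify the surgery rigorously: one must verify that every non-2-cell face of an embedding actually contains a noncontractible simple closed curve in its interior, that cutting along such a curve and capping with disks strictly lowers the Euler genus while keeping $K_{H(\e)}$ embedded (since the curve is disjoint from $K_{H(\e)}$), and that the process terminates in a 2-cell embedding after finitely many iterations. A minor additional worry is the Klein bottle, on which the Heawood number and the largest embeddable complete graph disagree; but if the surgery landed on {\Large $\kappa$}, then $K_{H(\e)}$ embedded there would force $H(\e) \le 6$, contradicting $H(\e) \ge H(\e') = H(2) = 7$, so this case never arises.
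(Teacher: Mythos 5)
Your proposal is correct and follows essentially the same route as the paper: cut along noncontractible curves inside the non-2-cell faces until $K_{H(\e)}$ is 2-cell embedded on some $S_{\e'}$, observe $H(\e')=H(\e)$, and invoke Lemma~\ref{lem:DKn}.\ref{b1}. The paper merely packages the reduction as an induction on the number of non-2-cell faces and spells out the case analysis of the cuts (separating, 2-sided nonseparating, 1-sided) that you correctly flag as the detail needing verification.
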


\begin{proof} Suppose the embedded $K_{H(\e)}$ has a non-2-cell $k$-region $f$; initially there are no derived faces. In $f$ we can find a simple noncontractible cycle $C$, disjoint from its boundary, $V(f) \cup E(f)$. If $C$ is surface-separating, it is necessarily 2-sided. We replace $C$ by two copies of itself, $C$ and $C'$, and insert in each copy a disk, producing surfaces $S(1)$ and $S'(1)$, each with Euler genus that is positive and less than $\e$. Since $K_{H(\e)}$ is connected, it is embedded on one of these surfaces, say $S(1)$. The face $f$ of $K_{H(\e)}$ on $S_\e$ becomes the derived face $f_1$ of $K_{H(\e)}$ on $S(1)$ and retains the same set of boundary vertices $V(f_1) = V(f)$ and edges $E(f_1) = E(f)$ so that $f_1$ is also a $k$-region. Initially $f$ is not a derived face, $f_1$ becomes a derived face and the Euler genus decreases by at least 1. If, later on in the process, $f$ is a derived face, then $f_1$ is also a derived face, the number of derived faces does not increase, and the Euler genus decreases by at least 1. 

If $C$ is not surface-separating and is 2-sided, we duplicate it and sew in two disks, as above, to create one new surface $S(1)$ of lower and positive Euler genus on which $K_{H(\e)}$ is embedded. If $C$ was not separating within the face $f$, then the derived face $f_1$ keeps the same set of boundary vertices and edges as $f$ and remains a $k$-region. As above, the number of derived faces increases by at most 1 and the Euler genus decreases by at least 2. If $C$ was separating within the face $f$, then $f$ splits into two derived faces $f_1$ and $f'_1$. Each vertex of $V(f)$ and each edge of $E(f)$ appears on one of these derived faces or possibly two when it was a repeat on $f$. More precisely, if $f_1$ is a $k_1$-region and $f'_1$ is a $k'_1$-region, then necessarily $k_1 + k'_1 = k$. In this case the Euler genus decreases by 2 and number of derived faces increases by at most 2, increasing by 2 only when the face being cut was an original face of $K_{H(\e)}$. If $C$ is not surface-separating and is 1-sided, we replace $C$ by a cycle $DC$ of twice the length of $C$ and insert a disk within $DC$, producing a surface $S(1)$ with Euler genus that is less than $\e$. $K_{H(\e)}$ remains embedded on $S(1)$, necessarily with positive Euler genus, and the derived face $f_1$ keeps the same boundary vertices and edges as $f$, remaining a $k$-region. Thus the number of derived faces increases by at most 1 and the Euler genus decreases by at least 1.

Now we prove the lemma by induction on the number of non-2-cell faces of the embedded $K_{H(\e)}$. We know the conclusion holds when there are no non-2-cell faces by Lemma~\ref{lem:DKn}. Otherwise let $f$ be a non-2-cell $k$-region. We repeatedly cut along simple noncontractible cycles within $f$ and its derived faces, creating surfaces $S(1)$, $S(2)$, \ldots on which $K_{H(\e)}$ remains embedded. We continue until every derived face of $f$ is a 2-cell. Then $K_{H(\e)}$ is embedded on, say, $S_{\e'}$ with $\e' < \e$ and has fewer non-2-cell faces. By induction each 2-cell face has size at most $H(\e)-1$ and thus every original 2-cell face, which has not been affected by the surgery, also has size at most $H(\e)-1$. 
\end{proof}

We have purposefully proved more within the previous proof.

\begin{cor}\label{cor:cutgenus} Suppose $K_{H(\e)}$ has a non-2-cell embedding on $S_\e$, and suppose that after cutting along noncontractible cycles in non-2-cell faces, $K_{H(\e)}$ has a 2-cell embedding on $S_{\e'}$, $\e' < \e$. Then the number of faces in the latter embedding that are derived from faces in the original embedding is at most $\e - \e'$.
\end{cor}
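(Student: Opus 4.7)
The plan is to extract the corollary directly from the bookkeeping already present in the proof of Lemma~\ref{lem:2cellface}. For each ``cut'' along a noncontractible cycle in a non-2-cell face, I would tabulate two quantities: $\Delta D$, the increase in the number of derived faces of $K_{H(\e)}$, and $\Delta \e$, the decrease in Euler genus. Then I would verify case by case that $\Delta D \le \Delta \e$, and sum over all cuts.

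Concretely, the surgical operations used in the proof of Lemma~\ref{lem:2cellface} fall into four cases, and each of them has already been analyzed there: (i) if $C$ is surface-separating and 2-sided, the component still carrying $K_{H(\e)}$ loses at least $1$ from its Euler genus while the derived-face count rises by at most $1$; (ii) if $C$ is nonseparating, 2-sided, and nonseparating within the face, the genus drops by at least $2$ while the count rises by at most $1$; (iii) if $C$ is nonseparating, 2-sided, and separating within the face, the genus drops by exactly $2$ and the count rises by at most $2$ (with the $2$ occurring only when the cut face was original); (iv) if $C$ is nonseparating and 1-sided, the genus drops by at least $1$ and the count rises by at most $1$. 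In each of the four cases $\Delta D \le \Delta \e$.

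The argument then proceeds by induction on the number of cuts needed to convert the embedding into a 2-cell embedding on $S_{\e'}$. Summing the inequality $\Delta D \le \Delta \e$ over the sequence of cuts, and using that the total drop in Euler genus is exactly $\e - \e'$, gives that the total number of derived faces produced is at most $\e - \e'$, as required.

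There is essentially no new obstacle: the difficult geometric content, namely justifying the genus drops and the behavior of the face boundaries under each type of cut, is already handled inside the proof of Lemma~\ref{lem:2cellface}. The only point requiring a little care is case (iii), where both $\Delta D$ and $\Delta \e$ can equal $2$; one must note that this is still consistent with $\Delta D \le \Delta \e$, so the inequality is preserved. Hence the corollary follows by a one-line accounting argument on top of the lemma already proved.
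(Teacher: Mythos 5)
Your proposal is correct and follows essentially the same route as the paper: the paper likewise observes, for each type of cut already analyzed in the proof of Lemma~\ref{lem:2cellface}, that the increase in the number of derived faces is at most the decrease in Euler genus (introducing counts $c_0, c_1, c_2$ of cuts by their derived-face increment and noting $\e - \e' \ge c_0 + c_1 + 2c_2 \ge c_1 + 2c_2$, the number of derived faces). Your per-cut inequality $\Delta D \le \Delta\e$ summed over the sequence of cuts is exactly this accounting argument.
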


\begin{proof} In the previous proof we saw that with some cuts the number of derived faces is increased by at most 1 and the Euler genus is decreased by at least 1; let $c_0$ denote the number of cuts in which there is no increase in the number of derived faces and $c_1$ the number of cuts in which there is an increase of 1 in the number of derived faces. If the increase is always at most 1, then the result follows. The number of derived faces is increased by 2 precisely when the cutting cycle $C$ within a face $f'$ is 2-sided, is not surface-separating, is separating within $f'$, and $f'$ is an original face of the embedding. In that case the Euler genus is decreased by 2 also; let $c_2$ denote the number of such cuts. Then the decrease in the Euler genus, $\e -\e'$ is at least $c_0 + c_1 + 2c_2 \ge c_1 + 2c_2$, which equals the number of derived faces.
\end{proof}

\begin{thm}\label{thm:thebiggie} Given $\e > 0$ and $G$ a graph on $n$ vertices that has a 2-cell embedding on $S_\e$, suppose that $G$ contains $K_{H(\e)}$. If $P \subset V(G)$ satisfies $dist(P) \ge 4$, then if the vertices of
$P$ each have a 1-list and every other vertex of $G$ has an $H(\e)$-list, then $G$ can be list-colored. 
\end{thm}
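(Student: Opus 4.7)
The plan is to prove Theorem~\ref{thm:thebiggie} by induction on the Euler genus $\e$, with Theorem~\ref{thm:manyvs} supplying the base cases in which $K_{H(\e)}$ is forced to embed 2-cell. The inductive step divides according to whether the copy of $K_{H(\e)}$ inside $G$ is itself 2-cell embedded on $S_\e$.

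In the 2-cell case, I would follow the template of Theorem~\ref{thm:manyvs}. Since $K_{H(\e)+1}$ does not embed on $S_\e$ and $dist(P)\ge 4$, at most one vertex $v_k\in P$ lies in or is adjacent to $K_{H(\e)}$, and in the adjacent case $v_k$ is joined to at most $H(\e)-1$ vertices of the complete graph; Lemma~\ref{lem:Kn}.\ref{c1} then list-colors $K_{H(\e)}\cup\{v_k\}$. By Lemma~\ref{lem:DKn}.\ref{b1} the faces of $K_{H(\e)}$ have size at most $H(\e)-1$, so I would extend this coloring face-by-face with Lemma~\ref{lem:lem1a}: parts~\ref{a1}--\ref{a2} cover $k\le H(\e)-2$, and part~\ref{a3} covers the residual case $k=H(\e)-1$. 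That residual case is legitimate because, by Lemma~\ref{lem:DKn}.\ref{b3}, $k=H(\e)-1$ forces $H(\e)=3i+4$, so $k=3i+3\in\{6,9,12,\dots\}$, exactly matching the hypothesis ``$k=6$ or $k\ge 9$'' in part~\ref{a3}.

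For the non-2-cell case, I would invoke the surface surgery from Lemma~\ref{lem:2cellface}: repeatedly cut along simple noncontractible cycles inside non-2-cell faces of $K_{H(\e)}$ until $K_{H(\e)}$ becomes 2-cell embedded on some $S_{\e'}$ with $\e'<\e$, producing at most $\e-\e'$ derived faces by Corollary~\ref{cor:cutgenus}. Because $G$ is 2-cell on $S_\e$, its restriction to each non-2-cell face of $K_{H(\e)}$ spans the topology of that face, so I would choose each cutting curve to traverse a cycle of $G$ itself; this realizes each cut combinatorially as the vertex- and edge-duplication of Case~A of Lemma~\ref{lem:lem1a}. The resulting graph $G^*$ embeds on $S_{\e'}$, still contains $K_{H(\e)}\supseteq K_{H(\e')}$, preserves $dist(P)\ge 4$ (distances can only grow under the cuts), and its $H(\e)$-lists are already larger than the $H(\e')$-lists required by the inductive hypothesis, which therefore list-colors $G^*$; requiring duplicated vertices to receive identical colors pulls the coloring back to $G$.

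The main obstacle I anticipate is the exceptional configuration in Lemma~\ref{lem:lem1a}.\ref{a3}: a vertex $x$ outside $K_{H(\e)}$ adjacent to all of $V(f)\cup\{v_i\}$ whose $H(\e)$-list consists exactly of the colors appearing on that boundary. This is precisely the $DK_{H(\e)}$ structure of Lemma~\ref{lem:DKn}.\ref{b3}, arising only when $H(\e)=3i+4$ and $\e=(3i^2+3i)/2$. I would circumvent it by first detecting such an $x$ in $G$ and list-coloring $K_{H(\e)}\cup\{x\}=DK_{H(\e)}$ via Lemma~\ref{lem:Kn}.\ref{c2} (when the 1-list from $v_i$ is available on the appropriate vertex) or Lemma~\ref{lem:Kn}.\ref{c3} (when several neighbors of $x$ acquire reduced lists), rather than coloring $K_{H(\e)}$ alone. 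A secondary delicacy is enforcing the ``same color'' requirement on duplicated pairs produced by the surgery; I would handle this by prescribing their lists identically and appealing to the duplication framework inside Lemma~\ref{lem:lem1a}, which was designed to accommodate exactly this situation.
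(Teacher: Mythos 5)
Your treatment of the case where $K_{H(\e)}$ is 2-cell embedded is essentially sound and matches the paper's mechanism: color $K_{H(\e)}\cup\{v_i^*\}$ by Lemma~\ref{lem:Kn}.\ref{c1}, extend into each face by Lemma~\ref{lem:lem1a}, and absorb the exceptional configuration of Lemma~\ref{lem:lem1a}.\ref{a3} by precoloring $DK_{H(\e)}$ via Lemma~\ref{lem:Kn}.\ref{c3}; your observation that $k=H(\e)-1$ forces $k\in\{6,9,12,\dots\}$ is exactly the point that makes part~\ref{a3} applicable. (The paper organizes this slightly differently, deleting the interior of one 2-cell face at a time and inducting on $n$, but that is cosmetic.)

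The non-2-cell branch, however, has a genuine gap, and it is the heart of the theorem. You propose to cut along noncontractible cycles \emph{of $G$ itself}, duplicate them, list-color the resulting graph $G^*$ on $S_{\e'}$ by induction on genus, and then ``pull back'' by requiring duplicated vertices to receive identical colors. Nothing in the inductive hypothesis lets you impose that requirement: the statement being proved produces \emph{some} list-coloring of $G^*$, and giving two copies of a vertex identical lists does not make the coloring assign them identical colors. The duplication device inside Lemma~\ref{lem:lem1a} works only because the duplicated vertices lie on a boundary that is \emph{precolored in advance}, so the common color is prescribed before the extension begins; here the duplicated vertices are interior vertices of $G$ whose colors are chosen by the inductive coloring, with no synchronization available. (There are secondary problems too: the cutting cycle may be forced to pass through vertices of $K_{H(\e)}$ or of $P$, and a duplicated $P$-vertex would violate $dist(P)\ge4$ in $G^*$.) The paper avoids surgery through the graph entirely. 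It first reduces to the situation where every vertex of $G$ outside $K_{H(\e^*)}\cup\{v_i^*\}$ lies in a non-2-cell face of $K_{H(\e^*)}$; then a counting argument (Corollary~\ref{cor:cutgenus} together with the face-size bounds of Lemma~\ref{lem:2cellKn}) shows at most $H(\e^*)-2$ vertices of $K_{H(\e^*)}$ can lie on non-2-cell faces, so some vertex $x^*$ of the complete graph is incident only to 2-cell faces, has degree exactly $H(\e^*)-1$, and carries a full $H(\e^*)$-list. Deleting $x^*$ leaves at most one non-2-cell face, namely the now-empty star of $x^*$, which can be cut along noncontractible curves \emph{disjoint from the graph}, dropping the genus without duplicating any vertex; then induction (on $n$ or on $\e$, or Theorem~\ref{thm:easy} if $K_{H(\e^*)}$ was destroyed) colors $G\setminus\{x^*\}$, and $x^*$ is colored last since its list exceeds its degree. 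Your proposal is missing both the counting argument and this ``delete a saturated vertex of $K_{H(\e)}$'' idea, and the surgery you substitute for them cannot be completed with the tools available.
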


\begin{proof} The proof is by induction on $\e$ and on $n$. We know the theorem holds for $G$ with a 2-cell embedding on $S_\e$ for $1 \le \e \le 2$ by Thm.~\ref{thm:manyvs}. Consider graphs with 2-cell embeddings on $S_{\e^*}$ for $\e^* \ge 3$. For each such embedded graph, the subgraph $K_{H(\e^*)}$ inherits an embedding on $S_{\e^*}$, and $H(\e^*) \ge 7$.

Since $dist(P) \ge 4$ we know that at most one vertex of $P$ lies in or is adjacent to a vertex of $K_{H(\e^*)}$. If there is one, call it $v_i^*$ and if not, ignore reference to $v_i^*$ in the following. By Lemma~\ref{lem:Kn}.\ref{c1} we know that $G[V(K_{H(\e^*)}) \cup \{v_i^*\}]$ can be list-colored since $v_i^*$ is adjacent to at most $H(\e^*)-1$ vertices of $K_{H(\e^*)}$ (because $K_{H(\e^*)+1}$ does not embed on $S_{e^*}$). If $G$ contains a vertex $x$ in neither $V(K_{H(\e^*)})$ nor $P$, then $G[V(K_{H(\e^*)}) \cup \{x\}]$ can be list-colored by
first coloring $K_{H(\e^*)}$ and then coloring $x$, which has an $H(\e^*)$-list and is adjacent to at most $H(\e^*)-1$ vertices of $K_{H(\e^*)}$.

Thus on surface $S_{\e^*}$ we know the result holds for every graph on $n$ vertices with $n \le H(\e^*)+1$. Let $G$ have $n^*$ vertices, $n^* > H(\e^*)+1$, and have a 2-cell embedding on $S_{\e^*}$.

Let $f$ be a $k$-region in the inherited embedding of $K_{H(\e^*)}$ with incident vertices $V(f)$ and edges $E(f)$, and let $G_f$ denote the subgraph of $G$ lying in the closure of $f$, $f \cup V(f) \cup E(f)$. Suppose $f$ is a 2-cell face of $K_{H(\e^*)}$ in whose interior lie vertices of $V(G) \setminus \{V(f) \cup \{v_i^*\}\}$; call these interior vertices $U_f$. Then after deleting the vertices of $U_f$, $G \setminus U_f$ has a 2-cell embedding on $S_{\e^*}$ with fewer than $n^*$ vertices, contains $K_{H(\e^*)}$, and contains vertices of $ P' \subseteq P$ with $dist(P') \ge 4$. By induction $G \setminus U_f$ is list-colorable. By Lemma~\ref{lem:2cellface} $k \le H(\e^*)-1$. We claim that the resulting list-coloring of $G[V(f) \cup \{v_i^*\}]$ extends to $G_f$. 

If $k \le H(\e^*)-2$, then the coloring extends by Lemma~\ref{lem:lem1a}.\ref{a1} and \ref{lem:lem1a}.\ref{a2}. Otherwise $k = H(\e^*)-1$ and the coloring then extends by Lemma~\ref{lem:lem1a}.\ref{a3}, unless there is a vertex $x$ of $G_f$ that has an $H(\e^*)$-list, is adjacent to $v_i^*$, not in $V(f)$, and to all vertices of $V(f)$, and its $H(\e^*)$-list consists of $H(\e^*)$ colors that appear on its neighbors. Then $G[V(K_{H(\e^*)}) \cup \{x\}]$ forms $DK_{H(\e^*)}$, which triangulates $S_{\e^*}$ and does not contain another vertex of $P$ since $dist(P) \ge 4$. Since $v_i^*$ is adjacent to at most three vertices of $DK_{H(\e^*)}$ (the vertices of a\linebreak 3-region), $G[V(DK_{H(\e^*)}) \cup \{v_i^*\}]$ can be list-colored by Lemma~\ref{lem:Kn}.\ref{c3}.
Then the list-coloring extends to the graph in the interior of each 3-region by Lemma~\ref{lem:lem1a}.\ref{a1} since $H(\e^*) \ge 7$.

Thus we can assume that every vertex of $V(G) \setminus \{V(K_{H(\e^*)}) \cup \{v_i^*\}\}$ lies in a non-2-cell region of the embedding of $K_{H(\e^*)}$ on $S_{\e^*}$. We claim there are two vertices of $K_{H(\e^*)}$ that lie only on its 2-cell faces; we prove that below. One of these might lie in $P$ or be adjacent to $v_i^*$, but the other, say $x^*$, has an $H(\e^*)$-list and is adjacent only to vertices of $K_{H(\e^*)}$, precisely $H(\e^*)-1$ of these. 

In that case we consider $G \setminus \{x^*\}$. If $G \setminus \{x^*\}$ does not contain $K_{H(\e^*)}$, it can be list-colored by Thm.~\ref{thm:easy}. Otherwise $G \setminus \{x^*\}$ does contain $K_{H(\e^*)}$. $G \setminus \{x^*\}$ might have a 2-cell embedding on $S_{\e^*}$ or it might not. In the former case, by induction on $n$ it can be list-colored. Suppose that $G \setminus \{x^*\}$ does not have a 2-cell embedding on $S_{\e^*}$. Then the face $f^*$ that was formed by deleting $x^*$ is the one and only non-2-cell face of that embedding since no other face of $G$ has been changed by the deletion of $x^*$.
Then we cut along noncontractible cycles within $f^*$, as described in Lemma~\ref{lem:2cellface}, until every face, derived from $f^*$, is a 2-cell in $G \setminus \{x^*\}$ now embedded on $S_{\e'}$ with $\e' < \e^*$. We have $H(\e') = H(\e^*)$ since $G \setminus \{x^*\}$ contains $K_{H(\e^*)}$. Thus $G \setminus \{x^*\}$ can be list-colored by induction on the Euler genus, and in all cases that coloring extends to $G$ since $x^*$ has a list of size $H(\e^*)$ which is larger than its degree. 

We return to the claim that there are two vertices of $K_{H(\e^*)}$ that lie only on 2-cell faces of its embedding on $S_{\e^*}$, given that every vertex of $V(G) \setminus \{V(K_{H(\e^*)}) \cup \{v_i^*\}\}$ lies in a non-2-cell face of the embedded $K_{H(\e^*)}$. Since the number of vertices of $G$, $n^*$, is greater than $H(\e^*)+1$, there are some non-2-cell faces containing other vertices of $G$. We count the maximum number of vertices of $K_{H(\e^*)}$ that lie on these non-2-cells to show that number is at most $H(\e^*)-2$.

As in Lemma~\ref{lem:2cellface} we repeatedly cut each non-2-cell face of the embedded $K_{H(\e^*)}$ until all remaining faces, the original and the derived, are 2-cells; suppose $K_{H(\e^*)}$ is then embedded on $S_{\e'}$ with $\e' < \e^*$. We know that every vertex originally on a non-2-cell face of $K_{H(\e^*)}$ is represented on at least one derived face and we show below that the total number of vertices on derived faces is at most $H(\e^*)-2$. We also know that $\e' \ge I(H(\e^*))$. Let $n_1 = \e' - I(H(\e^*))$, which is nonnegative, and $n_2 = \e^* - \e'$, which is positive. The variable $n_1$ will determine the face sizes in the 2-cell embedding of $K_{H(\e^*)}$ on $S_{\e'}$ (see Table~\ref{tab:heawood}), and $n_2$ will determine the maximum number of derived faces that have been created.

We consider the modulo 3 class of $H(\e^*)$, and we begin with the case of $H(\e^*) = 3i+4$, $i \ge 1$. We know that $\e^* \in \{(3i^2+i)/2, \ldots, (3i^2+3i)/2\} = \{I(3i+4), \ldots, I(3i+4)+i\}$ 
so that $n_1 + n_2 \le i$ by Lemma~\ref{lem:2cellKn}. By Cor.~\ref{cor:cutgenus} the number of derived faces is at most $n_2$. We can determine the possible face sizes of a 2-cell embedding of $K_{H(\e^*)}$ on $S_{\e'}$ with $\e' = I(3i+4) + n_1$. A 2-cell embedding on $S_{I(3i+4)}$ is necessarily a triangulation. A 2-cell embedding on $S_{I(3i+4)+1}$ consists of triangles except possibly for one 6-region, or triangles plus two faces whose sizes sum to 9, or triangles plus three faces whose sizes sum to 12 (necessarily three 4-regions). More generally when $\e' = I(3i+4) + n_1$, then the embedding might consist of triangles plus one $(3n_1+3)$-region, or triangles plus two faces whose sizes sum to $3n_1+6$, or triangles plus three faces whose sizes sum to $3n_1+9$, etc. And if we choose $n_2$ faces, all the derived faces, the sum of their sizes can be at most $3n_1 + 3n_2 \le 3i < 3i+2 = H(\e^*)-2$.

For $i \ge 1$, the same calculation holds when $H(\e^*) = 3i+3$, and when $H(\e^*) = 3i+5$, a similar count will work. In the latter case we have $n_1 + n_2 \le i-1$, though the face sizes may be slightly larger. A 2-cell embedding of $K_{H(\e^*)}$ on $S_{I(3i+5)}$ may have triangles plus a 5-region or triangles plus two 4-regions. In general a 2-cell embedding of $K_{H(\e^*)}$ on $S_{I(3i+5)+n_1}$ might have triangles plus one $(3n_1+5)$-region or triangles plus two regions whose sizes sum to $3n_1 + 8$, etc. With $n_2$ faces, all the derived faces, their sum of sizes can be at most $3n_1 + 3n_2 + 2 \le 3i-1 < 3i+ 3 = H(\e^*) - 2$. \end{proof}

We now complete the proof our main result, Thm.~\ref{thm:mainresult}.

 \begin{proof}[Proof of Thm.~\ref{thm:mainresult}]
If $G$ has a non-2-cell embedding on $S_\e$ that contains $K_{H(\e)}$, we can perform surgery on the non-2-cell faces, as we did in the proof of Lemma~\ref{lem:2cellface} and Thm.~\ref{thm:thebiggie}, to obtain a 2-cell embedding of $G$ on a surface of Euler genus $\e' < \e$ that still contains $K_{H(\e)}$, and hence $H(\e') = H(\e)$. We can thus apply Thm.~\ref{thm:thebiggie} to $G$ on $S_{\e'}$. This shows that the result holds for every embedding, 2-cell or non-2-cell, and Thm.~\ref{thm:mainresult} follows.
\end{proof}

The distance bound of 4 in Thms.~\ref{thm:mainresult} and \ref{thm:thebiggie} is best possible, for consider $K_{H(\e)}$ with a pendant edge attaching a degree-1 vertex to each vertex of $K_{H(\e)}$. Give each degree-1 vertex the list $\{1\}$ and place that vertex in the set $P$. When every other vertex has an identical $H(\e)$-list that contains 1, the graph is not list-colorable and $dist(P) = 3$.

The second corollary of Section 1 now follows easily.

\begin{proof}[Proof of Cor.~\ref{cor:twosizes}]
Let $f_1, \ldots, f_j$ be the faces with vertices with smaller lists. Add a vertex $x_i$ to $f_i$ and make it adjacent to all vertices of $V(f_i)$. Give each $x_i$ a 1-list $\{\alpha\}$ where $\alpha$ appears in no list of a vertex of $G$, and add $\alpha$ to the list of each vertex of $V(f_i)$, now the neighbors of $x_i$. Then $G \cup \{x_1, \ldots, x_j\}$ can be list-colored by Thm.~\ref{thm:mainresult} since with $P = \{x_1, \ldots, x_j\}$, $dist(P) \ge 4$, and this coloring is a list-coloring of $G$.
\end{proof}

\section{Concluding Questions}

\begin{enumerate}
 \item \v{S}krekovski~\cite{skrek} has shown the extension of Dirac's theorem that if $G$ is embedded on $S_\e$, $\e \ge 5$, $\e \ne 6, 9$, and does not contain $K_{H(\e)-1}$ or $K_{H(\e)-4} + C_5$, then $G$ can be $(H(\e)-2)$-colored. Is the same true for list-coloring?
 \item If $G$ embeds on $S_\e$ and does not contain one of the two graphs of Question 1, if the vertices of one face have at least $(H(\e)-2)$-lists, and if all other vertices have at least $H(\e)$-lists, can $G$ be list-colored? 
 \end{enumerate}

\subsection*{Acknowledgements}
 We wish to thank D. Archdeacon and a referee for helpful comments and Z. Dvo\v{r}\'{a}k and K.-I. Kawarabayashi for information on background material.

\end{document}